
\documentclass{amsart}
%
%
%
\newtheorem{thm}{Theorem}[section]

\newtheorem{lem}[thm]{Lemma}
\newtheorem{prop}[thm]{Proposition}
\theoremstyle{definition}
\newtheorem{defn}[thm]{Definition}
\theoremstyle{remark}
\newtheorem{rem}[thm]{Remark}
\newtheorem*{ex}{Example}
\numberwithin{equation}{section}


\usepackage{color}
\usepackage[colorlinks]{hyperref}        
\usepackage{mathrsfs}	
\usepackage{tikz}
\usetikzlibrary{math}
\usepackage{subfig}

\DeclareMathOperator*{\conv}{conv}
\DeclareMathOperator*{\conc}{conc}

\newcommand{\R}{\mathbb{R}}

\newcommand{\N}{\mathbb{N}}
\newcommand{\Z}{\mathbb{Z}}

\newcommand{\e}{\varepsilon}

\newcommand{\TV}{\text{\rm Tot.Var.}}

\newcommand{\green}[1]{\textcolor{green}{#1}}

\newcommand{\red}[1]{\textcolor{red}{#1}}

\newcommand{\Qtrans}{Q^\mathrm{trans}}
\newcommand{\Qcubic}{Q^\mathrm{Bianchini}}

\newcommand{\Qbm}{Q^\mathrm{BM}}
\newcommand{\tcanc}{\mathtt T^{\mathrm{canc}}}
\newcommand{\sigmaent}{\sigma^{\mathrm{ent}}}
\newcommand{\sigmarh}{\sigma^{\mathrm{rh}}}
\newcommand{\xint}{\mathtt X^{\mathrm{int}}}
\newcommand{\tint}{\mathtt T^{\mathrm{int}}}
\newcommand{\W}{\mathcal{W}}
\newcommand{\jleft}{\mathcal J^{\rm left}}
\newcommand{\jright}{\mathcal J^{\rm right}}
\newcommand{\tcr}{\mathtt T^{\mathrm{cr}}}
\newcommand{\feff}{\mathtt f^{\rm eff}}
\newcommand{\fQ}{\mathfrak Q}

%
%
%
%
%
%

\makeatletter
\def\grd@save@target#1{%
  \def\grd@target{#1}}
\def\grd@save@start#1{%
  \def\grd@start{#1}}
\tikzset{
  grid with coordinates/.style={
    to path={%
      \pgfextra{%
        \edef\grd@@target{(\tikztotarget)}%
        \tikz@scan@one@point\grd@save@target\grd@@target\relax
        \edef\grd@@start{(\tikztostart)}%
        \tikz@scan@one@point\grd@save@start\grd@@start\relax
        \draw[minor help lines] (\tikztostart) grid (\tikztotarget);
        \draw[major help lines] (\tikztostart) grid (\tikztotarget);
        \grd@start
        \pgfmathsetmacro{\grd@xa}{\the\pgf@x/1cm}
        \pgfmathsetmacro{\grd@ya}{\the\pgf@y/1cm}
        \grd@target
        \pgfmathsetmacro{\grd@xb}{\the\pgf@x/1cm}
        \pgfmathsetmacro{\grd@yb}{\the\pgf@y/1cm}
        \pgfmathsetmacro{\grd@xc}{\grd@xa + \pgfkeysvalueof{/tikz/grid with coordinates/major step}}
        \pgfmathsetmacro{\grd@yc}{\grd@ya + \pgfkeysvalueof{/tikz/grid with coordinates/major step}}
        \foreach \x in {\grd@xa,\grd@xc,...,\grd@xb}
        \node[anchor=north] at (\x,\grd@ya) {\pgfmathprintnumber{\x}};
        \foreach \y in {\grd@ya,\grd@yc,...,\grd@yb}
        \node[anchor=east] at (\grd@xa,\y) {\pgfmathprintnumber{\y}};
      }
    }
  },
  minor help lines/.style={
    help lines,
    step=\pgfkeysvalueof{/tikz/grid with coordinates/minor step}
  },
  major help lines/.style={
    help lines,
    line width=\pgfkeysvalueof{/tikz/grid with coordinates/major line width},
    step=\pgfkeysvalueof{/tikz/grid with coordinates/major step}
  },
  grid with coordinates/.cd,
  minor step/.initial=.2,           
  major step/.initial=1,            
  major line width/.initial=1pt,    
}
\makeatother


\begin{document}

%
%
%
%
%
%
%
%
%

\title[A ``Forward-in-time'' quadratic potential]
{A ``forward-in-time'' quadratic potential \\ for systems of conservation laws}

\author[]{Stefano Modena}

\address{%
Mathematisches Institut \\
Universit\"at Leipzig \\
Augustusplatz 10, 04109 Leipzig \\
Germany
}

\email{modena.stef@gmail.com, Stefano.Modena@math.uni-leipzig.de}

\thanks{The author would like to thank Prof. Stefano Bianchini for many helpful discussions about the topic of this paper.}
\thanks{The paper was submitted while the author was a Post-Doc at the MPI for Mathematics in the Sciences in Leipzig.}



\subjclass{35L65}

\keywords{Conservation laws, Interaction estimates, Quadratic potential}

\date{\today}
\dedicatory{Dedicated to Prof. Alberto Bressan on the occasion of his 60th birthday}

\begin{abstract}
A quadratic interaction potential $t \mapsto \Upsilon(t)$ for hyperbolic systems of conservation laws is constructed, whose value $\Upsilon(\bar t)$ at time $\bar t$ depends only on the present and the future profiles of the solution and not on the past ones. Such potential is used to bound the change of the speed of the waves at each interaction. 
\end{abstract}

\maketitle

\section{Introduction}
\label{sec:intro}

Let us consider the Cauchy problem for the system of conservation laws
\begin{equation}
\label{eq:cauchy}
\begin{cases}
u_t + F(u)_x = 0, \\
u(0, x) = \bar u(x),
\end{cases}
\end{equation}
where $F: \R^n \to \R^n$ is a smooth ($C^2$) function, which is assumed to be \emph{strictly hyperbolic}, i.e. its differential $DF(u)$ has $n$ distinct real eigenvalues in each point of its domain, and $\bar u$ is a $BV$ function with ``small'' total variation. 

As usual, we denote by $\lambda_1(u) < \cdots < \lambda_n(u)$ the eigenvalues of $DF(u)$ and by $r_1(u), \dots, r_n(u)$ its eigenvectors, $DF(u) r_k(u) = \lambda_k(u) r_k(u)$. The $d$-dimensional Lebesgue  measure on $\R^d$ is denoted by $\mathcal L^d$.

The $k$-th characteristic field is said to be \emph{genuinely non linear} (GNL) if $\lambda_k$ is strictly increasing in the direction of $r_k$, while it is said to be \emph{linearly degenerate} (LD) if $\lambda_k$ is constant in the direction of $r_k$. We will often refer to genuine non linearity and linear degeneracy as \emph{convexity properties} of the flux $F$. 

%
%
%
In what follows, by \emph{solution} to the Cauchy problem \eqref{eq:cauchy} we always mean \emph{vanishing viscosity solution}, i.e. a weak solution obtained as limit as $\mu \to 0$ of the viscosity approximations $u_t + F(u)_x = \mu u_{xx}$. The existence, uniqueness and stability of vanishing viscosity  solutions for general hyperbolic systems were established by S. Bianchini and A. Bressan in the fundamental paper \cite{BiaBre}. Many other notions of ``admissible solution'' have been proposed in the literature. Without entering into details, here we just recall that, for GNL/LD systems, vanishing viscosity solutions satisfy the \emph{Lax condition on shocks} (see, for instance, \cite[Section 8.3]{Daf}), while, for scalar conservation laws, the vanishing viscosity condition is equivalent to the \emph{entropy condition} (see, for instance \cite[Section 4.5]{Daf}).


\subsection{Strength and speed of waves}

In the analysis of solutions to hyperbolic conservation laws, the heart of the matter is to understand how the strength or the speed of waves can change across interactions. 

More precisely, let $u^\e$ be a piecewise constant approximate solution to the system \eqref{eq:cauchy}, constructed by means of the wavefront tracking algorithm or the Glimm scheme, $\e$ is the discretization parameter, see for instance \cite{Daf}; for our purpose it is not important whether one uses wavefront tracking approximations or the Glimm scheme. 

Roughly speaking, a \emph{wave} in $u^\e$ is a jump discontinuity line, its \emph{strength} is the size of the jump and its \emph{speed} is its slope in the $(t,x)$-plane. 

Let $\alpha, \alpha'$ be two waves in $u^\e$, as in Figure \ref{fig:interazione} and assume that, at time $\bar t$, $\alpha, \alpha'$ interact, i.e. they collide. The typical behavior of (approximate) solutions to conservation laws is that after the interaction the two waves $\alpha, \alpha'$ are somehow ``conserved'', but they undergo a change of their strength and/or speed.

\begin{figure}
\begin{tikzpicture}
\draw[->] (0,0.6) to (0,4);
\node[above] at (0,4) {$t$};
\draw[green] (1, 0.6) to (2,2);
\draw[red] (2,2) to (4.8,0.6);
\draw[red] (1.2, 4) to (2,2);
\draw[green] (2,2) to (6,4); 
\node[left, green] at (1.5, 1.4) {$\alpha$};
\node[right, red] at (3.3, 1.5) {$\alpha'$};
\node[left, red] at (1.6, 3.1) {$\alpha'$};
\node[right, green] at (4.2, 3) {$\alpha$};
\draw[dashed] (2,2) to (0,2);
\node[left] at (0,2) {$\bar t$};
\end{tikzpicture}
\caption{Typical interaction between two waves $\alpha, \alpha'$}
\label{fig:interazione}
\end{figure}
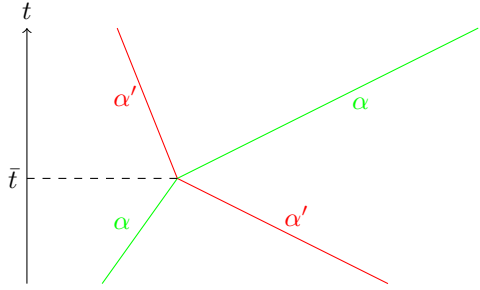

Understanding how the strength or the speed of waves can change across the interactions means that one is able to construct a suitable functional $t \mapsto Q(t)$, usually called \emph{interaction potential}, with the following properties:
\begin{enumerate}
\item $Q$ is uniformly bounded at time $0$: $Q(0) \leq C(F, \bar u)$, where $C(F, \bar u)$ is a constant which depends on the flux $F$ and the initial datum $\bar u$, but not on the discretization parameter $\e$;
\item $t \mapsto Q(t)$ is decreasing and, more precisely, if at time $\bar t$ two waves $\alpha, \alpha'$ interact, then 

\begin{equation}
\label{eq:potential}
\left.
\begin{split}
& |s(\bar t-, \alpha) - s(\bar t+, \alpha)|+ |s(\bar t-, \alpha') - s(\bar t+, \alpha')| 
\\[1em]
& \begin{split}
& |s(\bar t+, \alpha)||\sigma(\bar t-, \alpha) - \sigma(\bar t+, \alpha)| \\ 
& \qquad \qquad  + |s(\bar t+, \alpha')||\sigma(\bar t-, \alpha') - \sigma(\bar t+, \alpha')| 
\end{split}
\end{split}
\right\}
\leq Q(\bar t - ) - Q(\bar t+),
\end{equation}
where 
\begin{equation*}
\quad
\begin{split}
& s(\bar t \pm, \alpha) \text{ is the strength of $\alpha$ before and after the interaction time $\bar t$,} \\
& s(\bar t \pm, \alpha') \text{ is the strength of $\alpha'$ before and after the interaction time $\bar t$,} \\
& \sigma(\bar t \pm, \alpha) \text{ is the speed of $\alpha$ before and after the interaction time $\bar t$,} \\
& \sigma(\bar t \pm, \alpha) \text{ is the speed of $\alpha'$ before and after the interaction time $\bar t$.}
\end{split}
\end{equation*}
\end{enumerate}
The existence of a uniformly bounded potential with the property \eqref{eq:potential} is the fundamental tool in order to prove the existence of solutions to the system \eqref{eq:cauchy}, their uniqueness, their stability...

For instance, in the existence proof, a bound of the form \eqref{eq:potential} on the change of the strength of the waves (for the wavefront tracking) and on the change of the speed of the waves (for the Glimm scheme) is fundamental in order to construct a whole sequence of approximate solutions, to prove that such a sequence is pre-compact and to show that the limit is a solution to the Cauchy problem \eqref{eq:cauchy}. 

Many potentials with the property \eqref{eq:potential} have been proposed in the literature, so that, at the present stage, we have potentials which bound both the change of the strength and the change of the speed of the waves across interactions, both in the GNL/LD case and in the general, non-convex, strictly hyperbolic case. In particular: 
\begin{itemize}

\item in 1965 J. Glimm \cite{gli65} proposed a potential $Q^{\rm Glimm}$ which bounds the change of the \emph{strength} of the waves across interactions for systems whose characteristic fields are GNL/LD.

\item Later, in 1977, it was proved by T.P. Liu in \cite{TaiPing} that the same potential $Q^{\rm Glimm}$ bounds also the difference in \emph{speed} of the waves across an interaction, again for GNL/LD systems. 

\item Let us consider now systems whose flux $F$ is just strictly hyperbolic, but characteristic fields are not, in general, GNL/LD. A first potential $Q^{\rm Liu}$ which bounds the change of the \emph{strength} of the waves across an interaction was introduced by Liu in \cite{TaiPing2}, for a flux $F$ with finite number of inflection points.  In the general setting, where no convexity hypothesis on $F$ is assumed, a suitable potential $\Qcubic$ was found by Bianchini in 2003 \cite{Bia}.

\item More recently, in 2014-2015, in the three papers \cite{BiaMod1, BiaMod2, BiaMod3}, Bianchini and the author constructed a potential $Q^{\rm BM}$ which bounds the difference in \emph{speed} of the waves across an interaction, again for systems without any convexity assumption on $F$.

\end{itemize}

\subsection{A ``forward in time'' interaction potential}

A peculiar feature of the potential $\Qbm$ (which differentiates it from $Q^{\rm Glimm}$, $Q^{\rm Liu}$ and $\Qcubic$) is that its value $\Qbm(\bar t)$ at time $\bar t$ involves not only the profile $u^\e(\bar t)$ of the approximate solution at time $\bar t$, but also all of its past and future history, $\{u^\e(t)\}_{t \in [0, \infty)}$, making the definition of $\Qbm$ pretty complicated. Moreover, the fact that $\Qbm$ depends also on the past profiles $\{u^\e(t)\}_{t < \bar t}$ is counterintuitive, in the following sense. 

It is well known that the evolution generated by systems of conservation laws is unique forward in time but not backward in time. Therefore, it is natural to expect that an object defined ``at time $\bar t$'' depends on the present profile $u^\e(\bar t)$ and possibly on the future ones $\{u^\e(t)\}_{t > \bar t}$, but not on the past ones $\{u^\e(t)\}_{t < \bar t}$. Hence, a natural question (proposed by Bressan) is whether it is possible to define a uniformly bounded potential $\Upsilon$, which bounds the change of the speed of the waves at each interaction
\begin{equation}
\label{eq:cambio:vel}
\begin{split}
& \begin{split}
& |s(\bar t+, \alpha)||\sigma(\bar t-, \alpha) - \sigma(\bar t+, \alpha)| \\ 
& \qquad \qquad  + |s(\bar t+, \alpha')||\sigma(\bar t-, \alpha') - \sigma(\bar t+, \alpha')| 
\end{split}
\end{split}
\quad \leq \quad \Upsilon(\bar t - ) - \Upsilon(\bar t+),
\end{equation}
and whose value $\Upsilon(\bar t)$ depends only on the present and future profiles $\{u^\e(t)\}_{t \geq \bar t}$ and not on the past ones $\{u^\e(t)\}_{t < \bar t}$. 

A first answer  (proposed by Bressan)  to this question is to define $\Upsilon(\bar t)$ as the infimum of the potential $\Qbm(\bar t)$ over all past histories leading to the same profile $u^\e(\bar t)$. However, such definition is not explicit and such infimum may not be easy to characterize.

We thus ask whether it is possible to construct a potential $\Upsilon$ with a more explicit definition such that 
\begin{enumerate}
\item $\Upsilon(0) \leq C(F, \bar u)$;
\item $\Upsilon$ is decreasing and at each interaction \eqref{eq:cambio:vel} holds;
\item the value $\Upsilon(\bar t)$ at time $\bar t$ depends on the present and the future profiles of the approximate solution but not on the past ones.
\end{enumerate}

The answer to this question is positive and the aim of these notes is to present the construction of such  functional $\Upsilon$. 

Actually, we do not present the construction of $\Upsilon$ in the most general case, i.e. the case of an approximate solution to the system \eqref{eq:cauchy}, as we did in \cite{BiaMod3}, because this would lead to a huge paper full of technicalities. On the contrary, we present, in Section \ref{sec:core}, the detailed construction of $\Upsilon$ in the case of a wavefront tracking approximate solution to a scalar conservation law $u_t + F(u)_x = 0$, $u \in \R$, with a smooth flux $F: \R \to \R$ which, in general, does not satisfy any convexity assumption.

Such choice is not so restrictive for the following reason. The extension from the scalar case to the case of systems can be performed with the same techniques used in \cite{BiaMod2, BiaMod3}. A short summary of such techniques is contained in Section \ref{sec:extension}. On the contrary, the real novelty contained in this paper is that the new functional $\Upsilon$ not only satisfies Properties 1. and 2. above (this was already the case for $\Qbm$), but it also satisfies Property 3. However, it is not necessary to analyze the system case, in order to appreciate the features of $\Upsilon$ which lead to Property 3. On the contrary, all the novelties can be enjoyed through the analysis of a scalar wavefront tracking solution, with the advantage that, in this case, technical details can be mostly avoided.  


\subsection{A remark}

Differently from the potential $\Qbm$, the potentials $Q^{\rm Glimm}$, $Q^{\rm Liu}$, $\Qcubic$ and the potential $\Upsilon$ introduced in this paper have the property that their value at time $\bar t$ does not depend on the past profiles of the solution $\{u^\e(t)\}_{t < \bar t}$, but only on the present and the future ones $\{u^\e(t)\}_{t \geq \bar t}$. To be more precise, for the potential $Q^{\rm Glimm}$, $Q^{\rm Liu}$, $\Qcubic$, their value at time $\bar t$ depends \emph{only} on the present profile $u^\e(\bar t)$, while for the potential $\Upsilon$, its value  at time $\bar t$ truly depends both on the present and the future profiles $\{u^\e(t)\}_{t \geq \bar t}$. This distinction is meaningful for the following reason. 

All the interaction potentials introduced so far are defined on approximate solutions $u^\e$ constructed either through the wavefront tracking algorithm or through the Glimm scheme. If such potentials were defined on the exact solution $u$, then the distinction between
\begin{center}
{\bf S1}: ``the value of the potential at time $\bar t$ depends \\ on the present and the future profiles''
\end{center}
and
\begin{center}
{\bf S2}: ``the value of the potential at time $\bar t$ depends \\ only on the present profile''
\end{center}
would be meaningless, since, for the exact solution $u$, the profile $u(\bar t)$ at time $\bar t$ uniquely determines all the future profiles $u(t)$, $t \geq \bar t$, being the evolution unique forward in time. 

However, as remarked above, all the interaction potentials introduced so far are defined on approximate solutions $u^\e$, for which uniqueness forward in time is not true anymore. 

Indeed, assume that $u^\e$ is constructed by means of the Glimm scheme. Then, the profile $u^\e(\bar t)$ at time $\bar t$ uniquely determines the future evolution, only once the sampling sequence $\{\vartheta_i\}_{i \in \N}$ used in restarting procedure of the Glimm scheme is fixed. For the same datum $u^\e(\bar t)$ at time $\bar t$, different choices of the sampling sequence generate different evolutions at $t > \bar t$. 

Similarly, if $u^\e$ is constructed through the wavefront tracking algorithm, the profile $u^\e(\bar t)$ at time $\bar t$ uniquely determines the future evolution, only once we fix the threshold parameter $\rho>0$ which decides whether an accurate or a simplified Riemann solver has to be used. For the same datum $u^\e(\bar t)$ at time $\bar t$, different choices of the threshold $\rho$ generate different evolutions. 

Therefore, for approximate solutions, the two sentences {\bf S1} and {\bf S2} above are not equivalent. 


Remark \ref{rem:splitting} will clarify why in the definition of $\Upsilon$ we need to take into account not only the present profile of the approximate solution, but also the future ones.

\section{The quadratic potential for a wavefront tracking scalar solution}
\label{sec:core}

In this section we state and prove the main result of the paper, namely Theorem \ref{thm:main}, for an approximate solution to the scalar conservation law
\begin{equation}
\label{eq:cauchy:scalar}
\begin{cases}
u_t + F(u)_x = 0, \\
u(0, x) = \bar u(x) 
\end{cases}
\quad F: \R \to \R \text{ smooth}
\end{equation}
constructed through the wavefront tracking algorithm. First of all, in Section \ref{sec:main:thm}, we recall how the wavefront tracking algorithm works, we give a precise definition of \emph{change of the speed of the waves} across an interaction and we state Theorem \ref{thm:main}. Then in Sections \ref{sec:wavetracing}, \ref{sec:def:q}, \ref{sec:cancellation}, \ref{sec:interaction} we prove Theorem \ref{thm:main}.

\subsection{The main theorem}
\label{sec:main:thm}

First of all, we briefly recall how an entropic wavefront tracking solution is constructed, mainly with the aim to set the notations used later on.

For any $\e > 0$ let $F_\e$ be the piecewise affine interpolation of $F$ with grid size $\e$; let $\bar u^\e$ be an approximation of the initial datum $\bar u$ (in the sense that $\bar u^\e \rightarrow \bar u$ in $L^1$, as $\e \rightarrow 0$) of the Cauchy problem \eqref{eq:cauchy:scalar}, such that $\bar u^\e$ has compact support, it takes values in the discrete set $\Z\e$, and 
\begin{equation}
\label{bd_su_dato_iniziale}
\TV(\bar u^\e) \leq \TV(\bar u).
\end{equation}
Let $\xi_1 < \dots < \xi_K$ be the points where $\bar u^\e$ has a jump. 

At each $\xi_k$, consider the left and the right limits $\bar u^\e (\xi_k -), \bar u^\e(\xi_k+) \in \Z\e$. Solving the corresponding Riemann problems with flux function $F_\e$, we thus obtain a local entropic solution $u^\e(t,x)$, defined for $t$ sufficiently small, piecewise constant for each fixed time, and still taking values in $\Z\e$. 

From each $\xi_k$ some \emph{wavefronts} supported on discontinuity lines of $u^\e$ emerge. We prefer from now on to use the word \emph{wavefronts} instead of \emph{waves} to denote discontinuity lines, because the word \emph{wave} will have a slightly different meaning (see Section \ref{sec:wavetracing}). When two (or more) wavefronts meet (we will refer to this situation as an \emph{interaction}), we can again solve the new Riemann problem generated by the interaction, according to the above procedure, with flux $F_\e$, since the values of $u^\e(t,\cdot)$ always remain within the set $\Z\e$. 

The solution is then prolonged up to a time $t_2 > t_1$ where other wavefronts meet, and so on. One can prove \cite{Bre} that the total number of interaction points is finite, and hence the solution can be prolonged for all $t \geq 0$, thus defining a piecewise constant approximate solution $u^\e = u^\e(t,x)$, with values in the set $\Z\e$. We also assume that $u^\e(t, \cdot)$ is right continuous. 

Let $\{(t_j,x_j)\}$, $j \in \{1,2,\dots,J\}$, be the points in the $(t,x)$ plane where an interaction between two (or more) wavefronts occurs in the approximate solution $u^\e$. We also assume, without loss of generality, that $t_j < t_{j+1}$ and for every $j$ exactly two wavefronts meet in $(t_j, x_j)$. 
We also set $t_0 := 0$.

At each interaction $(t_j, x_j)$ we define the change of the speed $\Delta \sigma(t_j, x_j)$ of the wavefronts across the interaction as follows. Assume that the left wavefront carries the jump $(a,b)$, while the right wavefront carries the jump $(b,c)$, as in Figure \ref{fig:change:speed}. We have to distinguish between interaction between wavefronts having the same sign (so called \emph{same-sign interactions}, Figure \ref{fig:ssint}) and interaction between wavefronts having opposite sign (so called \emph{cancellations}, Figure \ref{fig:canc}).

\begin{figure}
\centering

\subfloat[][Same-sign interaction \label{fig:ssint}]
{
\begin{tikzpicture}[scale=1.2]
\draw[->] (0,0) to (0,2.2);
\node[above] at (0,2.2) {\footnotesize $t$};
\node at (3,2) {\footnotesize $u(t,x)$};
\node at (3,1.5) {\footnotesize $a < b< c$};
\draw (1,0) to (1.5,1.5);
\draw (3,0) to (3/2,1.5);
\draw (3/2,1.5) to (1.5,2.2);

\draw[dotted] (1.5, 1.5) to (0, 1.5);
\node[left] at (0,1.5) {\footnotesize $t_j$};

\node at (0.8, 0.6) {\footnotesize $a$};
\node at (1.8, 0.6) {\footnotesize $b$};
\node at (2.7, 0.6) {\footnotesize $c$};
\end{tikzpicture}
}
\qquad
\subfloat[][Cancellation \label{fig:canc}]
{
\begin{tikzpicture}[scale=1.2]



\draw[->] (0,0) to (0,2.2);
\node[above] at (0,2.2) {\footnotesize $t$};
\node at (3.5,2) {\footnotesize $u(t,x)$};
\node at (3.5,1.5) {\footnotesize $a<c<b$};
\draw (1,0) to (1.5,1.5);
\draw[dashed] (3,0) to (3/2,1.5);
\draw (1.5, 1.5) to (2,2);
\draw (1.5, 1.5) to (2.2,2);
\draw (1.5, 1.5) to (2.4,2);
\draw (1.5, 1.5) to (2.6,2);
\draw (1.5, 1.5) to (2.8,2);
\draw[dotted] (1.5, 1.5) to (0, 1.5);
\node[left] at (0, 1.5) {\footnotesize $t_j$};

\node at (0.8, 0.5) {\footnotesize $a$};
\node at (1.9, 0.5) {\footnotesize $b$};
\node at (3, 0.5) {\footnotesize $c$};

\end{tikzpicture}
}
\caption{Different types of interaction}
\label{fig:change:speed}
\end{figure}
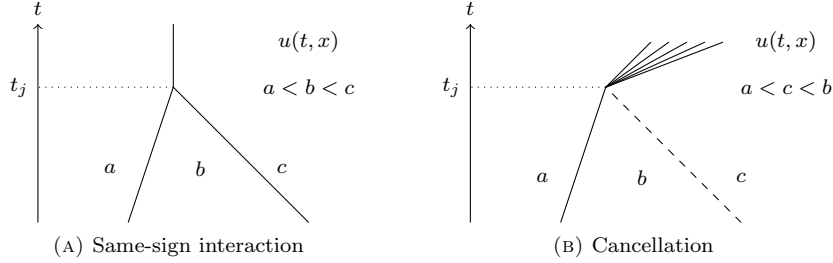

For same-sign interactions, assume that $a < b < c$ (in the case  $a > b > c$ a similar definition holds). We set
\begin{equation}
\label{eq:cambio:signa:int}
\begin{split}
\Delta \sigma(t_j, x_j) := 
\int_{a}^{b} \Big| & \frac{d}{d\tau} \conv_{[a, b]} F_\e(\tau) - \frac{d}{d\tau} \conv_{[a, c]} F_\e(\tau) \Big| d\tau \\
& + \int_{b}^{c} \Big| \frac{d}{d\tau} \conv_{[b, c]} F_\e(\tau) - \frac{d}{d\tau} \conv_{[a, c]} F_\e(\tau) \Big| d\tau.
\end{split}
\end{equation}

For cancellations, assume that $a < c < b$ (in the other cases similar definitions hold). We set
\begin{equation}
\label{eq:cambio:signa:canc}
\Delta \sigma(t_j, x_j) := 
\int_{a}^{c} \Big| \frac{d}{d\tau} \conv_{[a, c]} F_\e(\tau) - \frac{d}{d\tau} \conv_{[a, b]} F_\e(\tau) \Big| d\tau.
\end{equation}

Here $\conv_I g$ denotes the \emph{convex envelope} of a function $g$ on the interval $I$: $\conv_I g(\tau) := \sup \big\{ h(\tau) \ \big| \ h \leq g, \ h \text{ convex} \big\}$. A similar definition holds for the concave envelope $\conc_I g$. 

For a discussion about why \eqref{eq:cambio:signa:int}-\eqref{eq:cambio:signa:canc} are suitable definitions of the change of the speed of the wavefronts across the interaction, we refer to the Introductions to the papers \cite{BiaMod1, BiaMod2, BiaMod3}.

We can now state the main result of these notes. We stress once again that this result holds for conservation laws with a flux $F$ which satisfies no convexity assumption.

\begin{thm}
\label{thm:main}
There exists a functional $\Upsilon = \Upsilon(t)$
\begin{itemize}
\item explicitly defined by \eqref{eq:upsilon}, \eqref{eq:q};
\item whose value $\Upsilon(\bar t)$ at any time $\bar t$ depends only on the present and future profiles of the solution $u^\e(t)$, $t \geq \bar t$;
\end{itemize}
with the following properties:
\begin{enumerate}
\item $\Upsilon(0) \leq \|F''\|_{L^\infty}  \TV(\bar u)^2$;
\item $\Upsilon$ is decreasing and at each \emph{interaction} time $t_j$, it holds
\begin{equation}
\label{eq:stima}
\Delta \sigma(t_j, x_j) \leq \Upsilon(t_j - ) - \Upsilon(t_j +). 
\end{equation}
\end{enumerate}
\end{thm}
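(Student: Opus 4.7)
My plan is to follow the classical wave-tracing strategy that underlies $Q^{\rm Glimm}$, $\Qcubic$ and $\Qbm$, but with a pair weight defined \emph{prospectively}, using only future profiles of $u^\e$. Since $u^\e$ takes values in $\Z\e$, I would first decompose each wavefront at each time into \emph{atomic waves} of strength $\e$, carrying the sign of the underlying jump. Across a same-sign interaction the atomic waves of the two incoming fronts are identified, in order, with the atomic waves of the unique outgoing front; across a cancellation, atomic waves of opposite sign are paired off and declared destroyed at $t_j$, and the excess atomic waves are passed to the surviving outgoing front. This assigns to every atomic wave $\alpha$ a creation time, a cancellation time $\tcanc(\alpha) \in (0, +\infty]$, a polygonal trajectory $x(\cdot,\alpha)$, and left/right values $u^\pm(\alpha) \in \Z\e$ with $u^+(\alpha)-u^-(\alpha) = \pm \e$ giving its sign.

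Next, for any pair $(\alpha,\beta)$ of distinct atomic waves alive at time $\bar t$ with $x(\bar t,\alpha) < x(\bar t,\beta)$, I would define a pair weight $q(\bar t,\alpha,\beta)$ depending only on $\{u^\e(t)\}_{t\geq\bar t}$. The natural choice is to let $t^\star = t^\star(\bar t, \alpha, \beta) \geq \bar t$ be the first future time at which $\alpha$ and $\beta$ either collide or get separated by a cancellation, and let $I(\bar t, \alpha, \beta)$ be the interval of values taken by $u^\e$ between the trajectories of $\alpha$ and $\beta$ just before $t^\star$. Then $q$ should measure the difference, at the atomic slots of $\alpha$ and $\beta$, between the slope of $F_\e$ restricted to those slots and the slope of $\conv_I F_\e$ (or $\conc_I F_\e$, chosen according to the common sign), multiplied by $\e$. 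I then set $\Upsilon(\bar t)=\sum_{\alpha\neq\beta} q(\bar t,\alpha,\beta)$, which by construction is \emph{forward-in-time}.

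At an interaction time $t_j$ I would compare $\Upsilon(t_j-)$ and $\Upsilon(t_j+)$ pair-by-pair. The pairs $(\alpha,\beta)$ for which $t^\star(t_j-,\alpha,\beta)=t_j$ are exactly those realized by the interaction; summing their weight drops reproduces $\Delta\sigma(t_j,x_j)$ via \eqref{eq:cambio:signa:int} in the same-sign case and \eqref{eq:cambio:signa:canc} in the cancellation case, where the relevant slopes of $\conv_{[a,b]}F_\e$, $\conv_{[b,c]}F_\e$, $\conv_{[a,c]}F_\e$ appear naturally. For all remaining pairs one must establish monotonicity $q(t_j-,\cdot,\cdot)\geq q(t_j+,\cdot,\cdot)$; this reduces to the geometric fact that removing an ``opposite-sign bump'' from $F_\e$ on a subinterval can only raise its convex envelope (and lower its concave envelope) on any enclosing interval $I$. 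The initial bound follows from $q(0,\alpha,\beta) \leq \|F''\|_{L^\infty}\,|I(0,\alpha,\beta)|\cdot\e \leq \|F''\|_{L^\infty}\,\TV(\bar u)\cdot\e$, the fact that there are at most $\TV(\bar u^\e)/\e$ atomic waves, and \eqref{bd_su_dato_iniziale}.

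The hard part will be the pair-by-pair monotonicity at a generic interaction: for ``passive'' pairs $(\alpha,\beta)$ not meeting at $t_j$, both $t^\star$ and $I$ may be re-shaped by the interaction, and one has to verify that the re-computed effective envelope yields a smaller $q$. This is the combinatorial heart of the construction and the place where the truly forward-in-time character of $\Upsilon$ is won --- unlike in $\Qbm$, where the past history provides a crutch for bookkeeping, here I must show that knowledge of $\{u^\e(t)\}_{t\geq t_j+}$ alone encodes enough information to see the drop. I expect this to force a careful (and possibly nontrivial) choice of $t^\star$ in the definition of $q$, and is presumably why the author devotes a separate section (Section~\ref{sec:cancellation}) to the analysis of cancellations before treating interactions.
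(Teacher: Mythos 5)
Your overall architecture (a wave tracing plus a prospectively defined pair weight, compared at interaction times) is the same as the paper's, but three of your specific choices break down. First, your weight is mis-normalized: with $q(\bar t,\alpha,\beta)\leq \|F''\|_{L^\infty}|I|\cdot\e \leq \|F''\|_{L^\infty}\TV(\bar u)\,\e$ and of order $(\TV(\bar u^\e)/\e)^2$ pairs, your own arithmetic gives $\Upsilon(0)\lesssim \|F''\|_{L^\infty}\TV(\bar u)^3/\e$, which blows up as $\e\to 0$ rather than yielding Property 1. The paper's weight \eqref{E:weight} is a \emph{ratio}: the speed difference $\pi(\bar t,w,w')$ of \eqref{eq:pi} is divided by $d(\bar t,w,w')=\mathcal L^1(\W(\bar t,w,w'))$, the total amount of waves present at the future collision point; this gives $\mathfrak q\leq\|F''\|_{L^\infty}$ and hence \eqref{eq:unif:bd}. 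That denominator --- not the collision time $t^\star$ --- is the essential forward-in-time ingredient; Remark \ref{rem:splitting} explains that it must be read off a future profile precisely because a cancellation can split one shock into many pieces. Second, your numerator (slope of $F_\e$ on the atomic slot minus slope of $\conv_I F_\e$ on the whole enclosed interval) does not reproduce $\Delta\sigma(t_j,x_j)$ as defined in \eqref{eq:cambio:signa:int}, which compares envelope derivatives before and after the interaction, not $F_\e'$ against an envelope. The paper's numerator compares the entropic speeds given to $w$ and $w'$ by two different intervals $\jleft$, $\jright$ (the waves co-located with $w$, resp.\ $w'$, at the \emph{present} time and participating in the future collision); for the pairs realized at a same-sign interaction this equals exactly $(\sigma'-\sigma'')/(s'+s'')$, which integrates to $\tfrac12\Delta\sigma(t_j,x_j)$.

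Third, and most seriously, the step you defer as ``the hard part'' --- pair-by-pair monotonicity for passive pairs --- is not merely hard, it is false for functionals of this type. In Section \ref{sec:interaction} the paper shows that for a fixed passive wave $w\leq a$ the individual weights $\mathfrak q(t_j\pm,w,w')$ genuinely \emph{increase} for $w'\in(d_{j-1},b]$ after a same-sign interaction; the decrease is recovered only after integrating over the whole block $w'\in(d_{j-1},\bar d]$, using the convex-envelope identities at the point $\bar d$ of \eqref{eq:schock:int} together with Lemma \ref{L:fund:prop}. So your plan must be replaced by a grouped compensation estimate, and that estimate depends on the precise structure of $\jleft$ and $\jright$. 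Finally, note that the paper does not ask the quadratic part to control the speed change at cancellations at all: it sets $\Upsilon=\|F''\|_{L^\infty}\TV(\bar u)\TV(u^\e(t,\cdot))+\fQ(t)$ as in \eqref{eq:upsilon}, lets the linear total-variation term absorb $\Delta\sigma$ at cancellation points via $\Delta\sigma\leq\|F''\|_{L^\infty}|c-a||c-b|$, and only requires $\fQ$ to be non-increasing there. Adopting that splitting would spare you the cancellation half of your interaction analysis.
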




The hard part in the proof of Theorem \ref{thm:main} is to construct a potential which satisfies \eqref{eq:stima} in the same-sign interaction points, i.e. to prove the following proposition.

\begin{prop}
\label{prop:main}
There exists a functional $\fQ = \fQ(t)$
\begin{itemize}
\item explicitly defined by \eqref{eq:q};
\item whose value $\fQ(\bar t)$ at any time $\bar t$ depends only on the present and future profiles of the solution $u^\e(t)$, $t \geq \bar t$;
\end{itemize}
with the following properties:
\begin{enumerate}
\item  $\fQ(0) \leq \|F''\|_{L^\infty}  \TV(\bar u)^2$;
\item $\fQ$ is decreasing and at each \emph{same-sign interaction} time $t_j$, it holds
\begin{equation}
\label{eq:bound:fq}
\Delta \sigma(t_j, x_j) \leq \fQ(t_j - ) - \fQ(t_j +). 
\end{equation}
\end{enumerate}
\end{prop}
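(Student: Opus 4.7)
My plan is to imitate the construction of the Bianchini-Modena potential $\Qbm$ from \cite{BiaMod1,BiaMod2,BiaMod3}, but to anchor the elementary-wave decomposition at the reference time $\bar t$ rather than at $t = 0$, and to trace the waves only forward from $\bar t$. This automatically produces a functional whose value at $\bar t$ depends only on the profiles $\{u^\e(t)\}_{t \geq \bar t}$.

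First I would perform a forward-in-time wave tracing from an arbitrary $\bar t$. Each jump of $u^\e(\bar t,\cdot)$ from $u_L$ to $u_R$ is split according to the linear pieces of $\conv_{[u_L,u_R]} F_\e$ (or $\conc_{[u_L,u_R]} F_\e$, depending on sign) and then further subdivided into elementary waves indexed by levels $\tau\in\e\Z$, each carrying a sign $\sign(u_R - u_L)$ and a uniform speed equal to the corresponding envelope slope. Forward in time, same-sign interactions merge wavefronts (the elementary waves regroup into the contact groups of the new envelope $\conv_{[a,c]}F_\e$, with speeds updated accordingly), while cancellations annihilate opposite-sign pairs. Since no new wave is ever created forward in time, the procedure is well-defined using only $\{u^\e(t)\}_{t \geq \bar t}$.

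Next I would define $\fQ(\bar t)$ as a weighted double sum over alive waves at $\bar t$. For each ordered pair $(w,w')$ with $w$ strictly to the left of $w'$, let $[\bar t, T(w,w'))$ be the maximal future interval on which both survive and $w$ stays strictly to the left of $w'$; assign a pairing weight
\[
\pi(\bar t, w, w') \;:=\; \sup_{t \in [\bar t,\, T(w,w'))} \bigl[\mathbf v(t,w) - \mathbf v(t,w')\bigr]_+
\]
(a tentative candidate), and set $\fQ(\bar t) := \sum_{w\text{ left of }w'} \pi(\bar t,w,w')\,|w|\,|w'|$. The bound $\fQ(0) \leq \|F''\|_{L^\infty}\TV(\bar u)^2$ follows because all wave speeds differ by at most $\|F''\|_{L^\infty}\TV(\bar u^\e)$ and $\sum_{w,w'}|w||w'| \leq \TV(\bar u^\e)^2 \leq \TV(\bar u)^2$ by \eqref{bd_su_dato_iniziale}. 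At a cancellation the summands involving the annihilated waves vanish, and non-negativity of $\pi$ gives monotonicity. At a same-sign interaction between states $(a,b,c)$, all cross-pairs (waves at level in $[a,b]$ paired with waves at level in $[b,c]$) lose their strict spatial ordering at time $t_j$ and therefore drop out of the sum, yielding a genuine decrease of $\fQ$ that must be shown to dominate $\Delta\sigma(t_j,x_j)$.

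\textbf{The main obstacle} is precisely this last matching. The speed of a wave at level $\tau \in [a,b]$ jumps from $(\conv_{[a,b]} F_\e)'(\tau)$ to $(\conv_{[a,c]} F_\e)'(\tau)$, and via a Fubini-type identity the integrals in \eqref{eq:cambio:signa:int} must be rewritten as a genuine double sum over cross-pairs $(\tau,\tau') \in [a,b] \times [b,c]$; then the precise form of $\pi$ — very likely \emph{not} the tentative supremum displayed above, but rather the exact quantity dictated by this decomposition of envelope slopes as averages of secant slopes — has to be tuned so that its drop across $t_j$ on each cross-pair equals the corresponding density in $\Delta\sigma$. Getting this accounting right, and verifying that the so-defined $\pi$ is genuinely monotone along the forward evolution (so that intermediate cancellations or later same-sign interactions between $w,w'$ and other waves do not spoil the bookkeeping), is the technical heart of the proof; once it is in place, \eqref{eq:bound:fq} is immediate.
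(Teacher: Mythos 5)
Your overall architecture --- a forward-in-time wave decomposition plus a double integral over pairs of waves whose cross-terms must drop at a same-sign interaction by at least $\Delta\sigma(t_j,x_j)$ --- is the same as the paper's, and you correctly locate the difficulty in the choice of the pairing weight. But the proposal stops exactly where the proof actually lives, and the tentative weight you display provably fails. At a same-sign interaction between fronts $(a,b]$ and $(b,c]$ of strengths $s',s''$ and speeds $\sigma'>\sigma''$, every wave of the left front has speed $\sigma'$ and every wave of the right front has speed $\sigma''$ (each front of a wavefront tracking solution is a single affine piece of the relevant envelope), so the decrease of your functional coming from cross-pairs is exactly $(\sigma'-\sigma'')\,s's''$, whereas by \eqref{E:quadr:aoi} one must dominate $\Delta\sigma(t_j,x_j)=2(\sigma'-\sigma'')s's''/(s'+s'')$. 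The ratio is $(s'+s'')/2$, which is not bounded away from zero, so no constant multiple of your $\fQ$ can satisfy \eqref{eq:bound:fq}. The missing ingredient is the normalization by $d(\bar t,w,w')=\mathcal L^1\big(\W(\bar t,w,w')\big)$, the total strength of the single wavefront into which $w$ and $w'$ will merge at their first future interaction: the paper's weight is $\mathfrak q=\pi/d$ with $\pi=[\sigmaent(w,\jleft)-\sigmaent(w',\jright)]^+$ computed from envelopes of $F_\e$ over the specific future-determined subintervals $\jleft,\jright$, not from a supremum over time of speed differences. It is precisely this denominator, which can only be read off from the future profiles because cancellations can split a front (Remark \ref{rem:splitting}), that makes the functional simultaneously bounded by $\|F''\|_{L^\infty}\TV(\bar u)^2$ and large enough at interactions.

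Second, even granting the correct weight, the two hard verifications are not carried out. Monotonicity across cancellations is not just ``the annihilated summands vanish'': the surviving waves can split into several fronts, which changes $\jright$ (hence $\pi$) for pairs with only one member involved in the cancellation, and one must use that the splitting is governed by the convex envelope over the surviving interval to conclude that $\pi$ does not increase (Section \ref{sec:cancellation}). Likewise, at a same-sign interaction the weight of pairs $(w,w')$ with $w\le a$ and $w'\in(a,c]$ can genuinely \emph{increase} for $w'$ in a subinterval $(d_{j-1},b]$; the paper's Section \ref{sec:interaction} shows that this increase is compensated by the decrease on $(b,\bar d]$ through a chain of convexity inequalities. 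Your proposal flags these steps as ``the technical heart'' but supplies neither the correct object to verify them for nor the argument; as it stands it is a plan rather than a proof.
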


Assuming Proposition \ref{prop:main}, the proof of Theorem \ref{thm:main} follows easily.

\begin{proof}[Proof of Theorem \ref{thm:main} (assuming Proposition \ref{prop:main})]
It $(t_j, x_j)$ is a cancellation point, assuming as above that $a < c < b$, then it is non difficult to show that 
\begin{equation*}
\begin{split}
\Delta \sigma (t_j, x_j) & \leq \|F''\|_{L^\infty} |c - a| |c - b| \\
& \leq  \ \|F''\|_{L^\infty} \TV(\bar u) \Big[ \TV( u^\e(t-)) - \TV(u^\e(t+)) \Big].
\end{split}
\end{equation*}
Therefore the functional 
\begin{equation*}
t \mapsto  \|F''\|_{L^\infty} \TV(\bar u) \TV( u^\e(t, \cdot); \R)
\end{equation*}
bounds the change of speed across cancellations. Setting
\begin{equation}
\label{eq:upsilon}
\Upsilon(t) :=  \|F''\|_{L^\infty} \TV(\bar u) \TV( u^\e(t, \cdot); \R) + \fQ(t),
\end{equation}
we conclude the proof of Theorem \ref{thm:main}.
\end{proof}
 
%
%
%
%
%
%

The proof of Proposition \ref{prop:main}, i.e. the construction of the potential $\fQ$, is the core of this paper and it is performed in the next sections. In Section \ref{sec:wavetracing} we recall the fundamental notion of \emph{wave tracing}, in Section \ref{sec:def:q} we define the interaction potential $\fQ$, in Section \ref{sec:cancellation} we prove that $\fQ$ is decreasing across cancellations and finally in Section \ref{sec:interaction} we show that $\fQ$ is decreasing also across same-sign interactions and it satisfies \eqref{eq:bound:fq}.

\subsection{The wave tracing}
\label{sec:wavetracing}

The first step to prove Theorem \ref{thm:main} is the introduction of a so called \emph{wave tracing}. In his work \cite{TaiPing}, Liu showed that it is possible to decompose each jump in the approximate initial datum $u^\e(0, \cdot)$ into small (or even infinitesimal) pieces, which we call \emph{waves}, such that the position and the speed of each wave can be traced from time $0$ to the time when the wave is possibly canceled. Such \emph{wave tracing} is constructed as follows.\footnote{To be precise, the wave tracing we present here is not exactly Liu's original one. Rather, it is a modified version (suitable for scalar wavefront tracking solutions, but extendable also to the Glimm scheme and to systems), in which each wave has constant infinitesimal strength. See, for instance, \cite[Section 4]{BiaMod3}.} 

\subsubsection{Construction of the wave tracing}
\label{sss:construction:wt}

First of all set $$\bar U^\e(x) := \TV(\bar u^\e; (-\infty, x]).$$ We define the \emph{set of waves} as $\W:= (0, \TV(\bar u^\e)] = (0, \bar U^\e(+\infty)]$. 

For each wave $w \in \W$, we want now to define  its \emph{sign} $\mathcal S(w)$, its \emph{state} $\mathtt u(w)$, its \emph{cancellation time} $\tcanc(w) \in (0, \infty]$ and its \emph{position} $\mathtt X(t, w)$, $t \in [0, \tcanc(w))$. All these objects clearly depend on $\e$, but we omit to write such dependence explicitly.  

The \emph{position of the wave $w \in \W$ at time $0$} is defined as the unique point $\mathtt X(0,w)$ such that 
\begin{equation*}
\bar U^\e (\mathtt X(0,w)-) < w \leq  \bar U^\e (\mathtt X(0,w)+).
\end{equation*}
The \emph{sign} of $w$ is defined as
\begin{equation*}
\mathcal S(w) := 
\begin{cases}
1 &  \text{if $u^\e$ has a positive jump at $\mathtt X(0, w)$}, \\
-1 & \text{if $u^\e$ has a negative jump at $\mathtt X(0, w)$}.
\end{cases}
\end{equation*}
We then define the \emph{state} of $w$ as
\begin{equation*}
\mathtt u(w) := \int_0^w \mathcal S(w') dw'.
\end{equation*}

The definition of the sets 
\begin{equation*}
\W(t) := \{s \in \W \ \big| \ \tcanc(s) > t \big\}, \quad t \geq 0,
\end{equation*}
(which is an implicit definition of the map $\tcanc: \W \to (0, +\infty]$) and the definition of the position $\mathtt X$ for times $t > 0$ is given by a recursive procedure, partitioning the time interval $[0, +\infty)$ in the following way
\begin{equation}
\label{eq:recursion}
[0, +\infty) = \{0\} \cup (0, t_1] \cup \dots \cup (t_j, t_{j+1}] \cup \dots \cup (t_{J-1}, t_J] \cup (t_J, +\infty),
\end{equation}
assuming that $\W(t)$ and $\mathtt X(t, \cdot)$ are defined for any time $t \leq t_j$ and defining $\W(t)$ and $\mathtt X(t, \cdot)$ on the time interval $(t_j, t_{j+1}]$, with the property that $\mathcal S$ is constant on
\begin{equation*}
\W(t,x) := \mathtt X(t)^{-1}(x)
\end{equation*}
and the map $\W(t) \ni w \mapsto \mathtt X(t,w)$ is non-decreasing. 

We have already defined $\mathtt X(0, \cdot)$ at time $t=0$; moreover, since the codomain of $\tcanc$ is $(0, \infty]$, it holds also $\W(0) = \W$. 

Assume now we have already defined $\W(t)$ and $\mathtt X(t,\cdot)$ for every $t \leq t_j$ and let us define it for $t \in (t_j, t_{j+1}]$ (or $t \in (t_J, +\infty)$). For any $t \leq t_{j}$ set 
\begin{equation}
\label{eq:sigma}
\sigma(t,w) := 
\begin{cases}
\frac{d}{du}\conv_{\mathtt u ( \W(t, \mathtt X(t,w)))} F_\e (\mathtt u(w) ) & \text{if } \mathcal S(w) = +1, \\
\frac{d}{du}\conc_{\mathtt u ( \W(t,\mathtt X(t,w)))} F_\e (\mathtt u(w) ) & \text{if } \mathcal S(w) = -1.
\end{cases}
\end{equation}
Using the fact that all the waves in $\W(t,x)$ have the same sign, it is not difficult to show that $\mathtt u ( \W(t, \mathtt X(t,w)))$ is an interval in $\R$ and thus definition \eqref{eq:sigma} is well posed. 
Observe also that $\sigma(t,w)$ is the Rankine-Hugoniot speed of the wavefront containing $w$ at time $t$.
 
\noindent We can now define $\W(t)$ and $\mathtt X(t,\cdot)$ for $t \in (t_j, t_{j+1}]$, as follows. Introduce first the notation
\begin{equation*}
I(a,b) :=
\begin{cases}
(a, b] & \text{if } a < b, \\
(b, a] & \text{if } b < a.
\end{cases}
\end{equation*}
\begin{enumerate}
\item \label{pt:t<tj} For $t < t_{j+1}$ (or $t_J < t < +\infty$) and for any $w \in \W(t)$, set
\begin{equation}
\label{eq:position}
\W(t) = \W(t_j), \qquad \mathtt X(t,w) := \mathtt X(t_j, w) + \sigma(t_j, w)(t-t_j).
\end{equation}

\item \label{pt:t=tj} For $t=t_{j+1}$:

    \begin{enumerate} 
    \item if $\mathtt X(t_j,s) +\sigma(t_j,s)(t_{j+1} - t_j) \neq x_{j+1}$, set
    \begin{equation*}
    \mathtt X(t_{j+1},w) := \mathtt X(t_j,w) +\sigma(t_j,w)(t_{j+1} - t_j)
    \end{equation*}

    \item if $\mathtt X(t_j,w) +\sigma(t_j,w)(t_{j+1} - t_j) =  x_{j+1}$, we distinguish two further cases:

        \begin{enumerate}
        
        \item if $\mathtt u(w) \in I ( u^\e(t_{j+1},x_{j+1}-), u^\e(t_{j+1},x_{j+1}))$
        (i.e. $w$ ``survives'' the possible cancellation in $(t_{j+1},x_{j+1})$)
        we define
        \[
        \mathtt X(t_{j+1},s) :=  x_{j+1}.
        \]
        
        \item otherwise, if $w \in \W(t_j)$, $\mathtt X(t_j,w) +\sigma(t_j,w)(t_{j+1} - t_j) =  x_{j+1}$, but $\mathtt u(w) \notin I ( u^\e(t_{j+1},x_{j+1}-), u^\e(t_{j+1},x_{j+1}))$, (i.e. $w$ is canceled by the possible cancellation in $(t_{j+1},x_{j+1})$), we assign $\tcanc(s) := t_{j+1}$, thus defining $\W(t_{j+1})$. 

        \end{enumerate}
    \end{enumerate}
\end{enumerate}

Notice that, as far as $t < \tcanc(w)$, it holds $\sigma(t,w) = \partial_t \mathtt X(t,w)$.

\subsubsection{Further objects related to the wave tracing}

We conclude this section introducing some further notions related to the wave tracing, which will be frequently used in the next sections.

\begin{lem}
\label{lem:scambio:onde:stati:easy}
The following holds.
\begin{enumerate}
\item Any set $\W(t,x)$ is a finite union (possibly empty) of pairwise disjoint intervals of real numbers. 
\item The restriction $\mathtt u|_{\W(t,x)}$ is affine with slope equal to $\mathcal S(\W(t,x))$.
\item It holds $\mathtt u(\W(t,x)) = I(u(t,x-), u(t,x+))$.
\end{enumerate}
\end{lem}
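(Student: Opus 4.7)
The plan is to prove the three properties simultaneously by induction on $j \in \{0, 1, \dots, J\}$ following the partition (2.3), exploiting the recursive construction of $\W(t, \cdot)$ and $\mathtt X(t, \cdot)$ given in Section 2.2.1. The base case $t = 0$ is immediate: $\W(0, x) = \emptyset$ off the initial jump points, while at each $\xi_k$ one has $\W(0, \xi_k) = (\bar U^\e(\xi_k-), \bar U^\e(\xi_k+)]$, a single interval on which $\mathcal S$ is by definition constant, so that $\mathtt u$ restricted to it is affine with that slope and maps onto $I(\bar u^\e(\xi_k-), \bar u^\e(\xi_k+))$.

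For the inductive step, assume that (1)--(3) hold at $t = t_j$. On the open interval $(t_j, t_{j+1})$ the positions evolve linearly according to (2.6). Since by (2.5) the speed $\sigma(t_j, w)$ depends on $w$ only through $\mathtt u(w)$ and the derivative of the convex/concave envelope, which is piecewise constant, the map $w \mapsto \sigma(t_j, w)$ takes only finitely many values on each component of $\W(t_j, x)$; by the affinity of $\mathtt u$ on those components, this partitions $\W(t_j, x)$ into finitely many ``sub-wavefronts'', each a finite disjoint union of intervals, which then translate rigidly. The set $\W(t, y)$ for $t \in (t_j, t_{j+1})$ is exactly one such subfamily, and it inherits (1)--(3): the $\mathtt u$-image is precisely the affine piece of the envelope corresponding to the common speed, which by definition is the interval between two adjacent values of $u^\e(t, \cdot)$. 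At the interaction time $t_{j+1}$ only $\W(t_{j+1}, x_{j+1})$ is modified; writing $W_L, W_R$ for the two incoming wavefronts, the construction gives
\[
\W(t_{j+1}, x_{j+1}) = \bigl\{w \in W_L \cup W_R \,:\, \mathtt u(w) \in I(u^\e(t_{j+1},x_{j+1}-), u^\e(t_{j+1},x_{j+1}))\bigr\}.
\]
Since by the inductive hypothesis $\mathtt u$ is affine with slope $\pm 1$ on each interval of $W_L$ and of $W_R$, the preimage of an interval of $\R$, intersected with $W_L \cup W_R$, is again a finite disjoint union of intervals, proving (1). At a same-sign interaction no wave is removed and $\W(t_{j+1}, x_{j+1}) = W_L \cup W_R$ inherits both the common sign and the concatenated $\mathtt u$-image, which by the ordering of the states is the full interval $I(u^\e(t_{j+1},x_{j+1}-), u^\e(t_{j+1},x_{j+1}))$.

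The main obstacle is the cancellation case, where a priori the surviving waves might sit on both $W_L$ and $W_R$, spoiling the constant sign required by (2). To exclude this, suppose $W_L$ has sign $+1$ and $W_R$ has sign $-1$ (the symmetric case is analogous), and set $a := u^\e(t_{j+1}, x_{j+1}-)$, $b := u^\e(t_{j+1}-, x_{j+1})$, $c := u^\e(t_{j+1}, x_{j+1})$; then $a < b$, $c < b$, and by the inductive hypothesis $\mathtt u(W_L) = (a, b]$, $\mathtt u(W_R) = (c, b]$. The post-interaction interval $I(a, c)$ is then contained in $\mathtt u(W_L)$ if $a < c$ and in $\mathtt u(W_R)$ if $c < a$ (with complete cancellation and $\W(t_{j+1}, x_{j+1}) = \emptyset$ when $a = c$). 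Hence the surviving waves all lie on a single one of the two incoming wavefronts; this pins down the common sign, and by the definition of the selection, the $\mathtt u$-image equals $I(a, c) = I(u^\e(t_{j+1},x_{j+1}-), u^\e(t_{j+1},x_{j+1}))$, establishing (2) and (3) in this last case.
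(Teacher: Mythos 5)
Your proof is correct and follows essentially the same route as the paper: an induction along the recursive time-partition $\{0\}\cup(0,t_1]\cup\dots$ used to build the wave tracing, checking the open intervals $(t_j,t_{j+1})$ via the linear evolution of positions and the interaction times via the survival criterion $\mathtt u(w)\in I(u^\e(t_{j+1},x_{j+1}-),u^\e(t_{j+1},x_{j+1}))$. The only difference is organizational: the paper derives claims (1) and (2) directly from invariants it builds into the construction (finitely many intervals, constant sign on $\W(t,x)$) and reserves the induction for claim (3), whereas you carry all three through the induction and explicitly re-verify the constant-sign invariant at cancellations, which is a harmless (indeed slightly more self-contained) elaboration.
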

\begin{proof}
The first claim follows from the observation that, by construction, $\W(t)$ is a finite union of pairwise disjoint intervals of real numbers and from the monotonicity of $\W(t) \ni w \mapsto \mathtt X(t,w)$. 

The second claim follows from the fact that $\mathcal S$ is constant on $\W(t,x)$. 

The third claim is proved by induction, as in \eqref{eq:recursion}. At time $t=0$, the claim follows from the definition of wave tracing. Assume now the claim is true on the time interval $(t_{j-1}, t_j]$ and let us prove it on $(t_j, t_{j+1}]$. 
We distinguish two cases.

\begin{enumerate} 
\item If $t \in (t_j, t_{j+1})$, then the claim follows from the construction described at Point \ref{pt:t<tj}, page \pageref{pt:t<tj}, and, in particular, from 
   \eqref{eq:sigma}-\eqref{eq:position}. 
\item If $t = t_{j+1}$, then $\W(t,x) = \W(t_{j+1}, x_{j+1})$ and the claim follows from the construction described at Point \ref{pt:t=tj}, page \pageref{pt:t=tj}. \qedhere
\end{enumerate}
\end{proof}

\begin{defn}
\label{W_iow_def}
Let $\mathcal{I} \subseteq \W(t)$. We say that $\mathcal{I}$ is an \emph{interval of waves at time $t$} if all the waves in $\mathcal I$ have the same sign and, moreover, for any given $w,w' \in \mathcal{I}$, with $w \leq w'$, and for any $z \in \W(t)$
\[
w \leq z \leq w' \Longrightarrow z \in \mathcal{I}.
\]
\end{defn}

\begin{lem}
\label{lem:scambio:onde:stati}
Any interval of waves $\mathcal I$ is a finite union (possibly empty) of pairwise disjoint intervals of real numbers. Moreover the restriction $\mathtt u|_{\mathcal I}$ is affine with slope equal to $\mathcal S(\mathcal I)$ on each of such intervals and the image $\mathtt u(\mathcal I)$ is an interval in $\R$.
\end{lem}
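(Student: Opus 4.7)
The proof splits along the three assertions; I would handle them in order, with the third being the one that actually requires work.

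For the first claim, the plan is to intersect $\mathcal I$ with the decomposition of $\W(t)$ provided by Lemma \ref{lem:scambio:onde:stati:easy}(1). Writing $\W(t) = \bigsqcup_{k=1}^N J_k$ with each $J_k$ a real interval, if $w, w' \in \mathcal I \cap J_k$ and $w \le z \le w'$ for some $z \in \R$, then $z \in J_k \subseteq \W(t)$ since $J_k$ is a real interval, so $z \in \mathcal I$ by Definition \ref{W_iow_def}. Hence each $\mathcal I \cap J_k$ is itself an interval in $\R$, and $\mathcal I = \bigsqcup_k (\mathcal I \cap J_k)$ is the required finite disjoint decomposition. The second claim then follows immediately: since $\mathcal S \equiv \mathcal S(\mathcal I)$ on $\mathcal I$ by definition of interval of waves, the formula $\mathtt u(w) = \int_0^w \mathcal S(w')\,dw'$ shows that $\mathtt u|_{\mathcal I}$ is affine with slope $\mathcal S(\mathcal I)$ on every piece of the decomposition.

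The real work is the third claim, that $\mathtt u(\mathcal I)$ is a single interval rather than just a finite union. My plan is to introduce $x_{-} := \inf_{w \in \mathcal I} \mathtt X(t,w)$ and $x_{+} := \sup_{w \in \mathcal I} \mathtt X(t,w)$, and to prove first that every wave of $\W(t)$ whose position lies in $(x_-, x_+)$ must belong to $\mathcal I$. This is the one spot where order-convexity is indispensable: for such a wave $w'' \in \W(t,x)$, I would pick $w, w' \in \mathcal I$ with $\mathtt X(t,w) < x < \mathtt X(t,w')$; monotonicity of $\mathtt X(t,\cdot)$ in the wave variable forces $w < w'' < w'$, so $w'' \in \mathcal I$ by Definition \ref{W_iow_def}. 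Two consequences then emerge that I would exploit: every jump of $u^\e(t,\cdot)$ in $(x_-, x_+)$ carries sign $\mathcal S(\mathcal I)$, and between two consecutive positions $p_k < p_{k+1}$ of $\mathcal I$-waves the profile $u^\e(t,\cdot)$ has no jumps at all, hence is constant.

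To conclude, I would enumerate the positions $p_1 = x_- < p_2 < \dots < p_K = x_+$ of $\mathcal I$-waves and use Lemma \ref{lem:scambio:onde:stati:easy}(3) at each interior $p_k$ to identify $\mathtt u(\W(t,p_k))$ with the closed interval between $u^\e(t,p_k-)$ and $u^\e(t,p_k+)$; the constancy of $u^\e(t,\cdot)$ between consecutive $p_k$'s then ensures that these intervals chain together end-to-end, and adjoining the partial pieces at $p_1$ and $p_K$ yields $\mathtt u(\mathcal I) = [\mathtt u(\inf \mathcal I), \mathtt u(\sup \mathcal I)]$ (with the appropriate half-open convention). The main obstacle I anticipate is precisely the bookkeeping at the two extremal positions $p_1$ and $p_K$: there $\mathcal I$ may contain only a proper sub-interval of $\W(t,p_1)$ or $\W(t, p_K)$, and I must check that these truncated end-pieces still join the full interior intervals into a single connected image.
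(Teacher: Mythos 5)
Your proposal is correct and follows essentially the same route as the paper: decompose $\mathcal I$ into the full position-fibers $\W(t,x)$ at the interior positions plus two partial fibers at the extremal positions (your order-convexity argument that every wave with position in $(x_-,x_+)$ lies in $\mathcal I$ is exactly what justifies this), and then invoke Lemma \ref{lem:scambio:onde:stati:easy} to chain the images $I(u^\e(t,x-),u^\e(t,x+))$ end-to-end using the constancy of $u^\e(t,\cdot)$ between consecutive fiber positions. The paper leaves the chaining and the end-piece bookkeeping implicit (``the conclusion now follows applying Lemma \ref{lem:scambio:onde:stati:easy}''), whereas you spell them out; the only cosmetic difference is that for the first claim you intersect $\mathcal I$ with the connected components of $\W(t)$ rather than reading it off the fiber decomposition.
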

\begin{proof}
If $\mathcal I$ is an interval of waves at time $t$, then, by Definition \ref{W_iow_def} and by the fact that $\W(t) \ni w \mapsto \mathtt X(t,w)$ is non-decreasing, there exist points $x_0 < x_1 < \cdots < x_N$ and two waves $w_0 \in \W(t, x_0)$, $w_N \in \W(t, x_N)$ such that
\begin{equation*}
\begin{split}
& \mathcal A_0 = (w_0, + \infty) \cap \W(t,x_0) \text{ or } \mathcal A_0 = [w_0, + \infty) \cap \W(t,x_0), \\
& \mathcal A_N = (-\infty, w_N) \cap \W(t,x_N) \text{ or } \mathcal A_N = (-\infty, w_N] \cap \W(t,x_0) \\
\end{split}
\end{equation*}
and
\begin{equation*}
\mathcal I  = \mathcal A_0 \cup \bigcup_{n=1}^{N-1} \W(t,x_n) \cup \mathcal A_N.
\end{equation*}
The conclusion now follows applying Lemma \ref{lem:scambio:onde:stati:easy}. 
\end{proof}



\begin{rem}
\label{rem:scambio:onde:stati}
Thanks to the previous lemma, with a slight abuse of notation, we will often identify $\mathcal I$ with its image $\mathtt u(\mathcal I)$. 
\end{rem}


With this in mind, it makes sense to define the \emph{entropic speed} given to a wave $w \in \mathcal I$ by the interval $\mathcal I$ as
\begin{equation*}
\sigmaent(\mathcal I, w) :=
\begin{cases}
\frac{d}{du} \conv_{\mathtt u(\mathcal I)} F_\e (\mathtt u(w)) & \text{if } \mathcal S(\mathcal I) = +1, \\
\frac{d}{du} \conc_{\mathtt u(\mathcal I)} F_\e (\mathtt u(w)) & \text{if } \mathcal S(\mathcal I) = -1, \\
\end{cases}
\end{equation*}
and the \emph{Rankine-Hugoniot speed} of the interval $\mathcal I$ as
\begin{equation*}
\sigmarh(\mathcal I) := \frac{F_\e(\sup \mathtt u(\mathcal I)) - F_\e(\inf \mathtt u(\mathcal I))}{\sup \mathtt u(\mathcal I) - \inf \mathtt u(\mathcal I)}.
\end{equation*}

\begin{defn}
We say that two waves $w,w' \in \W(\bar t)$ interact at time $\bar t$ if $\mathtt X(\bar t,w) = \mathtt X(\bar t,w')$. We say that they will interact after time $\bar t$ if there is a time $t > \bar t$ such that $w,w'$ interact at time $t$. If $w,w' \in \W(\bar t)$ will interact after time $t$, we define
\begin{equation*}
\tint(\bar t,w,w') := \min \big \{t > \bar t \ \big| \ \mathtt X(t,w) = \mathtt X(t,w') \big\}
\end{equation*}
and
\begin{equation*}
\xint(\bar t,w,w') := \mathtt X(\tint(\bar t,w,w'), w) = \mathtt X(\tint(\bar t,w,w'), w').
\end{equation*}
\end{defn}

\subsection{Definition of the interaction potential $\fQ$}
\label{sec:def:q}

We are finally in the position to define the interaction potential as
\begin{equation}
\label{eq:q}
\fQ(\bar t) := \iint_{\substack{w,w' \in \W(\bar t) \\  w < w'}} \mathfrak q(\bar t,w,w') dw dw',
\end{equation}
where the \emph{weight} $\mathfrak q(\bar t,w,w')$ of the pair $(w,w')$ at time $\bar t$ is defined as follows. 

Fix $w,w' \in \W(\bar t)$, $w<w'$. First of all let us rule out the following two cases:
\begin{itemize}
\item  if $\mathcal S$ is not constant on $[w,w'] \cap \W(\bar t)$, then set $\mathfrak q(\bar t,w,w') := \|F''\|_{L^\infty}$;
\item  if $\mathcal S$ is constant on $[w,w'] \cap \W(\bar t)$, but either $w,w'$ have the same position at time $\bar t$ or they will never interact after time $\bar t$, then set $\mathfrak q(\bar t,w,w') := 0$.
\end{itemize}

We have still to define $\mathfrak q(\bar t,w,w')$ for pair of waves $w,w' \in \W(\bar t)$ such that all the waves in $[w,w'] \cap \W(\bar t)$ have the same sign, $\mathtt X(\bar t,w) < \mathtt X(\bar t,w')$ and $w,w'$ will interact after time $\bar t$.

Assume for definiteness that $w,w'$ and all the waves in $[w,w'] \cap \W(\bar t)$ are positive, the negative case being similar. Define
\begin{equation}
\label{eq:Wtww}
\W(\bar t,w,w') := \W\big(\tint(\bar t,w,w'), \xint(\bar t,w,w') \big) 
\end{equation}
the set of all waves which will have the same position of $w,w'$ at the first time when $w,w'$ will interact after time $\bar t$. Define also
\begin{equation}
\label{E:jj}
\begin{split}
& \jleft(\bar t,w,w') := \mathtt X^{-1}(\bar t) \big( \mathtt X(\bar t,w) \big) \cap \W(\bar t,w,w'), \\
& \jright(\bar t,w,w') := 
\mathtt X^{-1}(\bar t) \big(\mathtt X(\bar t,w') \big) \cap \W(\bar t,w,w').
\end{split}
\end{equation}
Roughly speaking, the set $\jleft(\bar t,w,w')$ (resp. $\jright(\bar t,w,w')$) consists of all the waves which have the same position of $w$ (resp. $w'$) at time $\bar t$ and which will have the same position of $w$ and $w'$ at the time in which $w$ and $w'$ will interact. 

\begin{lem}
The sets $\jleft(\bar t,w,w'), \jright(\bar t,w,w')$ are intervals of waves at time $\bar t$. 
\end{lem}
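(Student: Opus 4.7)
The plan is to verify the two defining properties of an ``interval of waves at time $\bar t$'' (Definition \ref{W_iow_def}) for $\jleft(\bar t,w,w')$; the argument for $\jright(\bar t,w,w')$ is identical. Throughout I assume, as in the running hypothesis preceding \eqref{eq:Wtww}, that $w,w'$ and all waves in $[w,w'] \cap \W(\bar t)$ are positive.

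First, the same-sign property is immediate: by definition $\jleft(\bar t,w,w') \subseteq \W(\bar t, \mathtt X(\bar t, w))$, and by Lemma \ref{lem:scambio:onde:stati:easy} all waves in $\W(\bar t, \mathtt X(\bar t, w))$ share the same sign (in fact all are positive, since $w \in \jleft$ is). So the real content is the order-convexity property. Fix $z_1,z_2 \in \jleft(\bar t,w,w')$ with $z_1 < z_2$ and an arbitrary $z \in \W(\bar t)$ with $z_1 \leq z \leq z_2$; I want to show $z \in \jleft(\bar t,w,w')$, i.e.\ both that $\mathtt X(\bar t, z) = \mathtt X(\bar t, w)$ and that $z \in \W(\tint(\bar t,w,w'), \xint(\bar t,w,w'))$.

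The first of these is a direct consequence of the monotonicity of $w \mapsto \mathtt X(\bar t, w)$ on $\W(\bar t)$: since $\mathtt X(\bar t, z_1) = \mathtt X(\bar t, z_2) = \mathtt X(\bar t, w)$, the value $\mathtt X(\bar t, z)$ is squeezed between them and must equal $\mathtt X(\bar t, w)$. In particular $z$ belongs to the same wavefront as $z_1, z_2$ at time $\bar t$, hence shares their speed $\sigma(\bar t, w)$ given by \eqref{eq:sigma}.

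For the second, I will track the three waves $z_1, z, z_2$ forward in time from $\bar t$ to $\tint(\bar t,w,w')$ through each interaction point $t_j \in (\bar t, \tint(\bar t,w,w')]$. Between interaction times the three waves stay at a common position by \eqref{eq:position}, so the issue is only what happens at a cancellation time $t_j$ whose cancellation point $x_j$ coincides with the common position of $z_1, z, z_2$. Here the construction of step (2b) on page \pageref{pt:t=tj} tells us that a wave $s$ on the incoming wavefront survives exactly when $\mathtt u(s) \in I(u^\e(t_j, x_j-), u^\e(t_j, x_j))$. By Lemma \ref{lem:scambio:onde:stati:easy}, the restriction of $\mathtt u$ to the wavefront $\W(t_{j-1}, x_j)$ (or equivalently to the set of waves arriving at $(t_j, x_j)$) is affine with slope $\mathcal S = +1$, so $\mathtt u(z_1) \leq \mathtt u(z) \leq \mathtt u(z_2)$. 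Since the survival interval $I(u^\e(t_j, x_j-), u^\e(t_j, x_j))$ is convex and, by assumption, contains both $\mathtt u(z_1)$ and $\mathtt u(z_2)$ (because $z_1, z_2$ are alive at $\tint > t_j$ and keep the position of $w, w'$ throughout), it also contains $\mathtt u(z)$. Hence $z$ survives this cancellation and shares the position of $z_1, z_2$ at time $t_j$. Iterating over all interaction times in $(\bar t, \tint(\bar t,w,w')]$ yields $\mathtt X(\tint(\bar t,w,w'), z) = \xint(\bar t,w,w')$, so $z \in \W(\bar t,w,w')$, completing the proof.

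The main obstacle is the last step: one has to use both the affine structure of $\mathtt u$ on each wavefront (to guarantee that the states $\mathtt u(z_1), \mathtt u(z), \mathtt u(z_2)$ are monotonically ordered) and the convexity of the survival interval $I(\cdot,\cdot)$ to pass the ``sandwich'' property through each cancellation. Everything else is monotonicity of $\mathtt X(\bar t,\cdot)$ and the bookkeeping in the recursive construction of the wave tracing.
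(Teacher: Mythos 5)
Your first half agrees with the paper: the same--sign property is immediate, and the identity $\mathtt X(\bar t,z)=\mathtt X(\bar t,w)$ for the sandwiched wave $z$ follows from the monotonicity of $\W(\bar t)\ni s\mapsto \mathtt X(\bar t,s)$. For the second half the paper takes a shorter route than you do: it applies that same monotonicity \emph{once more, at the single time} $t=\tint(\bar t,w,w')$, where $\mathtt X(\tint(\bar t,w,w'),z_1)=\mathtt X(\tint(\bar t,w,w'),z_2)=\xint(\bar t,w,w')$ squeezes $\mathtt X(\tint(\bar t,w,w'),z)$ to the same value; no tracking through intermediate interactions is needed, \emph{except} to know that $z$ is still alive at $\tint(\bar t,w,w')$ so that $\mathtt X(\tint(\bar t,w,w'),z)$ is defined. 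That survival question is real (the lemma would be false without it), and it is the one point the paper's own proof passes over in silence, so your instinct to address it is sound.

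The gap is in how you address it: the premise ``between interaction times the three waves stay at a common position by \eqref{eq:position}'' is false in general. Formula \eqref{eq:position} propagates each wave with its own speed $\sigma(t_j,\cdot)$ from \eqref{eq:sigma}, and immediately after a cancellation the surviving waves assigned to the common position $x_j$ may receive \emph{different} speeds, because the convex envelope of $F_\e$ over the outgoing jump can consist of several segments. This is precisely the splitting phenomenon of Remark \ref{rem:splitting} and Figure \ref{F:cancellation:feff}. Hence $z_1,z,z_2$ need not travel together on $(\bar t,\tint(\bar t,w,w'))$ even if all three survive; once they separate, your induction (``a cancellation at \emph{their common} position'') no longer applies, and $z$ may later reach a cancellation point at which neither $z_1$ nor $z_2$ is present, where your convexity-of-$I(\cdot,\cdot)$ argument gives no information. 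The repair is to decouple the two issues: establish separately that $z$ survives up to $\tint(\bar t,w,w')$ (your survival-interval/affineness-of-$\mathtt u$ mechanism is the right ingredient when all three waves sit on the same front, but the post-splitting configurations require an additional argument), and then conclude with the paper's one-line monotonicity argument at $t=\tint(\bar t,w,w')$ rather than by propagating a common position forward in time.
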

\begin{proof}
We prove that $\jleft(\bar t,w,w')$ is an interval of waves at time $\bar t$; a similar proof holds for $\jright(\bar t,w,w')$. Let $z , z',z'' \in \W(\bar t)$ be given, $z < z' < z''$, and assume that $z,z'' \in \jleft(\bar t,w,w')$. We want to prove that $z' \in \jleft(\bar t,w,w')$. Being $\W(t) \ni w \mapsto \mathtt X(t, w)$ non-decreasing for each fixed time $t$, we have
\begin{equation*}
\mathtt X(\bar t,w) = \mathtt X(\bar t,z) \leq \mathtt X(\bar t, z') \leq  \mathtt X(\bar t,z'') = \mathtt X(\bar t,w)
\end{equation*}
and thus $\mathtt X(\bar t,z') = \mathtt X(\bar t,w)$. A entirely similar argument at the interaction time $\tint(\bar t,w,w')$ implies that $\mathtt X(\tint(\bar t,w,w'), z') = \mathtt X(\tint(\bar t,w,w'), w)$ and thus $z' \in \jleft(\bar t,w,w')$. 
\end{proof}

We can now define the weight:
\begin{equation}
\label{E:weight}
\mathfrak q(\bar t,w,w'):=
\frac{\pi(\bar t,w,w')}{d(\bar t,w,w')},
\end{equation} 
where
\begin{equation}
\label{eq:pi}
\pi(\bar t,w,w') := \Big[\sigmaent \big(w, \jleft(\bar t,w,w')   \big) - \sigmaent \big(w', \jright(\bar t,w,w')  \big)\Big]^+ 
\end{equation}
and
\begin{equation}
\label{eq:di}
d(\bar t,w,w') := \mathcal L^1 \Big( \W(\bar t,w,w')\Big).
\end{equation}
By definition of $\jleft(\bar t,w,w')$, $\jright(\bar t,w,w')$, it is not difficult to see that $\mathfrak q(\bar t,w,w') \leq \|F''\|_\infty$, and thus
\begin{equation}
\label{eq:unif:bd}
\fQ(\bar t) \leq \|F''\|_{L^\infty} \TV(u^\e(t))^2.
\end{equation}

\begin{rem}
Notice that, in general, the speed $\sigmaent (w, \jleft(\bar t,w,w'))$ (resp. $\sigmaent (w', \jright(\bar t,w,w') )$) does not coincide with the Rankine-Hugoniot speed of the jump located in $\mathtt X(\bar t,w)$ (resp. $\mathtt X(\bar t,w')$). See also the Example at the end of this Section. 
\end{rem}

\begin{rem}
The functional $\fQ$ defined by \eqref{eq:q} has the property that its value at time $\bar t$ depends only on the profiles $u^\e(t)$, $t \geq \bar t$ and not on the past profiles. Moreover it satisfies the bound \eqref{eq:unif:bd} and it is constant on each interval $(t_j, t_{j+1})$. Therefore, in order to complete the proof of Proposition \ref{prop:main} it is enough to prove that 
\begin{itemize}
\item it decreases across cancellations (Section \ref{sec:cancellation});
\item it decreases across same-sign interactions and the decrease across same-sign interactions satisfies $\Delta \sigma(t_j, x_j) \leq \fQ(t_j-) - \fQ(t_j+)$ (Section \ref{sec:interaction}).
\end{itemize}
\end{rem}


The following lemma presents the fundamental property of the intervals $\jleft(\bar t,w,w'), \jright(\bar t,w,w')$ defined above. 

\begin{lem}
\label{L:fund:prop}
Let $w,w',w'' \in \W(\bar t)$ be three fixed waves, $w \leq w' \leq w''$. Then
\begin{equation*}
\jright(\bar t,w,w') = \jright(\bar t,w,w'') 
\quad \Longrightarrow \quad 
\jleft(\bar t,w,w') = \jleft(\bar t,w,w'').
\end{equation*}
\end{lem}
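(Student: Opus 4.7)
The plan is to deduce from the hypothesis $\jright(\bar t,w,w') = \jright(\bar t,w,w'')$ that the two interaction times $\tint(\bar t,w,w')$ and $\tint(\bar t,w,w'')$, together with the corresponding interaction positions $\xint$, actually coincide. Once this is established, the entire sets $\W(\bar t,w,w')$ and $\W(\bar t,w,w'')$ defined in \eqref{eq:Wtww} are the same, and hence by the definition \eqref{E:jj} the $\jleft$ sets also coincide. So everything reduces to showing that the two $\tint$'s agree.

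To that end, I will use the simple fact that, by construction, $w'$ always lies in $\jright(\bar t,w,w')$ and $w''$ always lies in $\jright(\bar t,w,w'')$. First I would argue that $w' \in \jright(\bar t,w,w') = \jright(\bar t,w,w'')$ implies two things: (a) $\mathtt X(\bar t,w') = \mathtt X(\bar t,w'')$ (since every element of $\jright(\bar t,w,w'')$ sits at position $\mathtt X(\bar t,w'')$ at time $\bar t$) and, more importantly, (b) $\mathtt X(\tint(\bar t,w,w''),w') = \xint(\bar t,w,w'') = \mathtt X(\tint(\bar t,w,w''),w)$. Property (b) says that $w$ and $w'$ are already at a common position at time $\tint(\bar t,w,w'')$, hence by the minimality in the definition of $\tint$ we get $\tint(\bar t,w,w') \leq \tint(\bar t,w,w'')$.

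Symmetrically, I would exploit $w'' \in \jright(\bar t,w,w'') = \jright(\bar t,w,w')$ to deduce $\mathtt X(\tint(\bar t,w,w'),w'') = \mathtt X(\tint(\bar t,w,w'),w)$, and conclude by minimality that $\tint(\bar t,w,w'') \leq \tint(\bar t,w,w')$. Combining the two inequalities gives $\tint(\bar t,w,w') = \tint(\bar t,w,w'')$, and then $\xint(\bar t,w,w') = \mathtt X(\tint(\bar t,w,w'),w) = \xint(\bar t,w,w'')$. Substituting into \eqref{eq:Wtww} yields $\W(\bar t,w,w') = \W(\bar t,w,w'')$, so the sets $\mathtt X^{-1}(\bar t)(\mathtt X(\bar t,w)) \cap \W(\bar t,w,w')$ and $\mathtt X^{-1}(\bar t)(\mathtt X(\bar t,w)) \cap \W(\bar t,w,w'')$ from \eqref{E:jj} are equal, which is exactly $\jleft(\bar t,w,w') = \jleft(\bar t,w,w'')$.

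The only delicate point, which I would not expect to cause real trouble, is to be careful that the hypothesis $w \leq w' \leq w''$ combined with the monotonicity of $w \mapsto \mathtt X(\bar t,w)$ is consistent with $\mathtt X(\bar t,w') = \mathtt X(\bar t,w'')$ derived above (indeed, the monotonicity forces all waves between $w'$ and $w''$ to share this common position). No truly hard step is anticipated; the whole argument is a structural consequence of the definitions together with the fact that $\tint$ is defined as an infimum (in fact a minimum) of interaction times.
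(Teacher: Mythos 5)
Your proof is correct and follows essentially the same route as the paper: both arguments reduce the claim to showing $\tint(\bar t,w,w')=\tint(\bar t,w,w'')$, from which $\W(\bar t,w,w')=\W(\bar t,w,w'')$ and hence the equality of the $\jleft$ sets follow. The only (cosmetic) difference is that you obtain the inequality $\tint(\bar t,w,w')\leq\tint(\bar t,w,w'')$ from $w'\in\jright(\bar t,w,w')=\jright(\bar t,w,w'')$, whereas the paper derives it from the monotonicity of $\W(t)\ni z\mapsto \mathtt X(t,z)$ together with $w\leq w'\leq w''$; both are valid.
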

\begin{proof}
Observe that 
\begin{equation*}
w'' \in \jright(\bar t, w,w'') = \jright(\bar t, w, w') = \mathtt X(\bar t)^{-1} \big(\mathtt X(t,w') \big) \cap \W(\bar t,w,w').
\end{equation*}
In particular $w'' \in \W(\bar t,w,w')$. Hence 
\begin{equation*}
\mathtt X \big(\tint(\bar t,w,w'), w\big) = \mathtt X \big(\tint(\bar t,w,w'), w'\big) = \mathtt X\big(\tint(\bar t,w,w'),w''\big),
\end{equation*}
and thus $\tint(\bar t, w,w'') \leq \tint(\bar t,w,w')$. On the other side, being $\W(t) \ni w \mapsto \mathtt X(t, w)$ non-decreasing for each fixed $t$, we must have also $\tint(\bar t,w,w'') \geq \tint(\bar t,w,w')$ and thus $\tint(\bar t,w,w') = \tint(\bar t ,w,w')$. As a consequence, 
\begin{equation*}
\begin{split}
\W(\bar t,w,w') 
& = \W\big(\tint(\bar t,w,w'), \xint(\bar t,w,w') \big) \\
& = \W\big(\tint(\bar t,w,w''), \xint(\bar t,w,w'') \big) \\
& = \W(\bar t,w,w''), 
\end{split}
\end{equation*}
from which the conclusion follows. 
%
\end{proof}

\begin{ex}
\label{example}
We give an explicit example of computation of $\mathfrak Q$. Let us assume that $u^\e|_{[t_0, +\infty) \times \R}$ is as in Figure \ref{fig:example}.
 
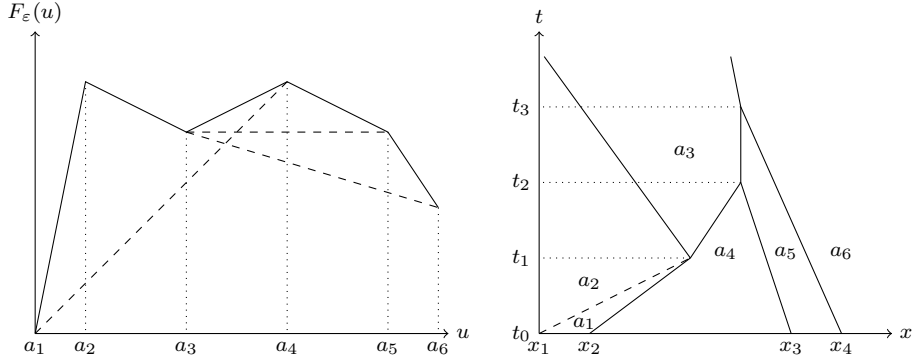
\begin{figure}
\centering
\begin{tikzpicture}[scale=0.67]
\footnotesize

\coordinate (a1) at (0-0.5,0); \coordinate (b1) at (0-0.5,0);
\coordinate (a2) at (1-0.5,5); \coordinate (b2) at (1-0.5,0);
\coordinate (a3) at (3-0.5,4); \coordinate (b3) at (3-0.5,0);
\coordinate (a4) at (5-0.5,5); \coordinate (b4) at (5-0.5,0);
\coordinate (a5) at (7-0.5,4); \coordinate (b5) at (7-0.5,0);
\coordinate (a6) at (8-0.5,2.5); \coordinate (b6) at (8-0.5,0);

\draw (a1) to (a2) to (a3) to (a4) to (a5) to (a6);
\draw[dashed] (a1) to (a4);
\draw[dashed] (a3) to (a5);
\draw[dashed] (a3) to (a6);

\draw[dotted] (a2) to (b2);
\draw[dotted] (a3) to (b3);
\draw[dotted] (a4) to (b4);
\draw[dotted] (a5) to (b5);
\draw[dotted] (a6) to (b6);

\node[below] at (b1) {$a_1$};
\node[below] at (b2) {$a_2$};
\node[below] at (b3) {$a_3$};
\node[below] at (b4) {$a_4$};
\node[below] at (b5) {$a_5$};
\node[below] at (b6) {$a_6$};

\draw [->] (0-0.5,0) to (0-0.5,6);
\draw [->] (0-0.5,0) to (8.2-0.5,0);

\node[right] at (8.2-0.5,0) {$u$};
\node[above] at (0-0.5,6) {$F_\e (u)$};

\draw [->] (9.5,0) to (16.5,0);
\draw[->] (9.5,0) to (9.5,6);
\node[right] at (16.5,0) {$x$};
\node[above] at (9.5,6) {$t$};

\coordinate (c1) at (9.5,0);
\coordinate (c2) at (10.5,0);
\coordinate (c3) at (12.5,1.5);
\coordinate (c4) at (9.6,5.5);
\coordinate (c5) at (13.5,3);
\coordinate (c6) at (13.5,4.5);
\coordinate (c7) at (13.3,5.5);
\coordinate (c8) at (14.5,0);
\coordinate (c9) at (15.5,0);
\coordinate (c10) at (9.5,1.5);
\coordinate (c11) at (9.5, 3);
\coordinate (c12) at (9.5, 4.5);

\draw[dashed] (c1) to (c3);
\draw (c2) to (c3);
\draw (c3) to (c4);
\draw (c3) to (c5) to (c6) to (c7);
\draw (c8) to (c5);
\draw (c9) to (c6);
\draw[dotted] (c3) to (c10);
\draw[dotted] (c5) to (c11);
\draw[dotted] (c6) to (c12);

\node[left] at (c10) {$t_1$};
\node[left] at (c11) {$t_2$};
\node[left] at (c12) {$t_3$};
\node[left] at (c1) {$t_0$};
\node[below] at (c1) {$x_1$};
\node[below] at (c2) {$x_2$};
\node[below] at (c8) {$x_3$};
\node[below] at (c9) {$x_4$};

\node at (10.5,1) {$a_2$};
\node at (10.4,0.2) {$a_1$};
\node at (13.2,1.6) {$a_4$};
\node at (14.4,1.6) {$a_5$};
\node at (15.5,1.6) {$a_6$};
\node at (12.4,3.6) {$a_3$};

\normalsize

\end{tikzpicture}
\caption{The situation described in the Example at page \pageref{example}. The figure on the left describes the flux $u \mapsto F_\e(u)$; the figure on the right describes the solution $u^\e(t,x)$ in the $(t,x)$-plane.}
\label{fig:example}
\end{figure}

By our definition of the wave tracing, the set of waves $\W(t_0)$ at time $t_0$ must consist of a finite union of intervals of total length $a_2 - a_1$ corresponding to the negative waves located at $x_1$ and a finite union of intervals of total length $a_6 - a_1$ corresponding to positive waves located in $x_2, x_3, x_4$. We can thus assume w.l.o.g. that 
\begin{equation*}
\W(t_0) = (a_1 - (a_2 - a_1), a_1] \cup (a_1, a_7]
\end{equation*}
The waves in $(a_1 - (a_2 - a_1), a_1]$ are the negative waves located in $x_1$;  the waves in $(a_1, a_4]$ are those located in $x_2$, the waves in $(a_4,a_5]$ are those located in $x_3$ and the waves in $(a_5, a_6]$ are those located in $x_4$. 

After the cancellation takes place at time $t_1$, the negative waves in $(a_1 - (a_2 - a_1), a_1]$ and the positive waves in $(a_1, a_2]$ are canceled, while those in $(a_2, a_4]$ are split into two subsets $(a_2, a_3] \cup (a_3, a_4]$ following distinct trajectories. 

Since all negative waves have the same position at time $t_0$, $\mathfrak q(t_0,w,w') = 0$ for each pair $(w,w')$ of negative waves. We thus need to compute $\mathfrak q(t_0,w,w')$ only for pairs of positive waves. We can assume that $w,w'$ have different position (otherwise $\mathfrak q(t_0,w,w')=0$, by definition). We have:

%


\begin{itemize}

\item if $w \in (a_1, a_3]$, then, for every $w' > a_3$, $w,w'$ will never interact after time $t_0$ and therefore $\mathfrak q(t_0,w,w') = 0$;

\item if $w \in (a_3, a_4]$ and $w' \in (a_4, a_5]$, then, $\tint(t_0,w,w') = t_2$ and thus, according to \eqref{eq:Wtww}-\eqref{E:jj}, 
\begin{equation*}
\W(t_0,w,w') = (a_3,a_5], \quad \jleft(t_0,w,w') = (a_3,a_4], \quad \jright(t_0,w,w') = (a_4,a_5];
\end{equation*}
therefore $d(t_0,w,w') = a_5 - a_3$ and
\begin{equation*}
\begin{split}
\pi(t_0,w,w') & :=  \Big[ \conv_{[a_3,a_4]} F_\e(w) - \conv_{[a_4,a_5]]} F_\e(w') \Big]^+ \\
& =   \bigg[ \frac{F_\e(a_4) - F_\e(a_3)}{a_4 - a_3} - \frac{F_\e(a_5) - F_\e(a_4)}{a_5 - a_4} \bigg]^+;
\end{split}
\end{equation*}

\item similarly, if $w \in (a_3,a_4]$ and $w' \in (a_5,a_6]$, then $\tint(t_0,w,w') = t_3$ and thus
\begin{equation*}
\W(t_0,w,w') = (a_3,a_6], \quad \jleft(t_0,w,w') = (a_3,a_4], \quad \jright(t_0,w,w') = (a_5,a_6];
\end{equation*}
therefore $d(t_0,w,w') = a_6 - a_3$ and
\begin{equation*}
\begin{split}
\pi(t_0,w,w') & := \Big[ \conv_{[a_3,a_4]} F_\e(w) - \conv_{[a_5,a_6]} F_\e(w') \Big]^+ \\
& = \bigg[ \frac{F_\e(a_4) - F_\e(a_3)}{a_4 - a_3} - \frac{F_\e(a_6) - F_\e(a_5)}{a_6 - a_5} \bigg]^+;
\end{split}
\end{equation*}

\item finally, if $w \in (a_4,a_5]$ and $w' \in (a_5,a_6]$, then $\tint(t_0,w,w') = t_3$ and thus
\begin{equation*}
\W(t_0,w,w') = (a_3,a_6], \quad \jleft(t_0,w,w') = (a_4,a_5], \quad \jright(t_0,w,w') = (a_5,a_6];
\end{equation*}
therefore $d(t_0,w,w') = a_6 - a_3$ and
\begin{equation*}
\begin{split}
\pi(t_0,w,w') & := \Big[ \conv_{[a_4,a_5]} F_\e(w) - \conv_{[a_5,a_6]} F_\e(w) \Big]^+ \\
& = \bigg[ \frac{F_\e(a_5) - F_\e(a_4)}{a_5 - a_4} - \frac{F_\e(a_6) - F_\e(a_5)}{a_6 - a_5} \bigg]^+. 
\end{split}
\end{equation*}
\end{itemize}

\end{ex}

\subsection{Behavior of $\fQ$ across cancellations}
\label{sec:cancellation}

In this section we prove the following proposition.

\begin{prop}
Let $(t_j, x_j)$ be a cancellation point.  Then
\begin{equation*}
\fQ(t_j+) \leq \fQ(t_j-).
\end{equation*}
\end{prop}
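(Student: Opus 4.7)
The plan is to compare $\fQ(t_j-)$ and $\fQ(t_j+)$ by decomposing the domain of the double integral \eqref{eq:q} according to which waves survive the cancellation. Writing $\W(t_j-) = \W(t_j+) \sqcup \W^{\mathrm{can}}$, where $\W^{\mathrm{can}}$ is the (finite union of intervals of) waves killed at $(t_j,x_j)$, one obtains
\begin{equation*}
\fQ(t_j-) = \iint_{\substack{w,w' \in \W(t_j+) \\ w < w'}} \mathfrak q(t_j-,w,w')\,dw\,dw' + R,
\end{equation*}
where $R \geq 0$ collects all pairs with at least one wave in $\W^{\mathrm{can}}$ (recall $\mathfrak q \geq 0$). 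Hence the proposition reduces to the pointwise inequality
\begin{equation*}
\mathfrak q(t_j+,w,w') \leq \mathfrak q(t_j-,w,w'), \qquad w,w' \in \W(t_j+),\ w < w',
\end{equation*}
which I would establish by a case analysis on the definition of the weight.

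The easy cases are handled by monotonicity in the set of waves: if $\mathcal S$ is not constant on $[w,w'] \cap \W(t_j+)$, then a fortiori it is not constant on the larger set at $t_j-$, and both weights equal $\|F''\|_{L^\infty}$; if $\mathcal S$ is constant on $[w,w'] \cap \W(t_j-)$ but $w,w'$ share a position at $t_j$ or have no future encounter, then the continuity of $t\mapsto \mathtt X(t,w)$ encoded in \eqref{eq:position} forces the same conclusion on both sides and both weights vanish; the mixed sub-case in which the sign becomes constant only after the cancellation is harmless, because the post-cancellation weight is at most $\|F''\|_{L^\infty}$.

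The heart of the argument is the case where $\mathcal S$ is constant on $[w,w'] \cap \W(t_j-)$, the positions $\mathtt X(t_j,w) \neq \mathtt X(t_j,w')$ differ, and a future encounter exists. Since $\mathtt X(t_j-,\cdot) = \mathtt X(t_j,\cdot) = \mathtt X(t_j+,\cdot)$ on $\W(t_j+)$, the minimum defining $\tint$ is attained at some $t^* > t_j$ independent of whether the minimization starts from $t_j-$ or $t_j+$. Thus $\tint(t_j\pm,w,w')$ and $\xint(t_j\pm,w,w')$ take common values $t^*,x^*$, and by \eqref{eq:Wtww}
\begin{equation*}
\W(t_j-,w,w') = \W(t^*,x^*) = \W(t_j+,w,w'),
\end{equation*}
so $d(t_j-,w,w') = d(t_j+,w,w')$. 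For the numerator $\pi$, the intervals $\jleft,\jright$ of \eqref{E:jj} are intersections of $\mathtt X^{-1}(t_j\pm)(\cdot)$ with the common set $\W(t^*,x^*)$; since every wave in $\W(t^*,x^*)$ is alive past $t_j$, the canceled waves are automatically filtered out and the two intersections agree. Consequently the entropic speeds \eqref{eq:sigma} are computed on the same $\mathtt u$-interval and $\pi$ is unchanged, yielding the equality $\mathfrak q(t_j-,w,w') = \mathfrak q(t_j+,w,w')$. The subtlest step I anticipate is the bookkeeping for waves sitting at $(t_j,x_j)$ that survive the cancellation (e.g.\ positive waves with $\mathtt u$-coordinate in $(a,c]$ when $a<c<b$): one must verify that "same position at $t_j$" is the identical condition before and after, a fact that rests squarely on the wave-tracing identity $\mathtt X(t_j-,\cdot) = \mathtt X(t_j+,\cdot)$, and that the jump in $\sigma$ across $t_j$ is irrelevant because $\sigma$ enters the weight only through the envelopes taken at the future interaction point $t^*$.
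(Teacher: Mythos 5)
Your reduction of the claim to a pointwise comparison of weights on pairs of surviving waves (discarding the nonnegative contribution $R$ of pairs involving a canceled wave) is the right first step and agrees with the paper's strategy. However, the pointwise \emph{equality} $\mathfrak q(t_j-,w,w')=\mathfrak q(t_j+,w,w')$ that you then assert is justified by an argument that ignores the one genuinely nontrivial phenomenon of this proposition: the \emph{splitting} of the surviving shock (cf.\ Remark \ref{rem:splitting}). In the paper's notation ($a<c<b$, surviving positive waves $(a,c]$), after the cancellation the convex envelope of $F_\e$ on $[a,c]$ may touch the graph of $F_\e$ at interior points $d_1<\dots<d_{n-1}$, so that $(a,c]$ leaves $x_j$ as \emph{several} outgoing wavefronts $(a,d_1],\dots,(d_{n-1},c]$ with different speeds. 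Consequently, for $w\le a$ and $w'\in(d_{i-1},d_i]$ the fibers $\mathtt X(t_j-)^{-1}\big(\mathtt X(t_j-,w')\big)$ (which contains all of $(a,b]$, the left incoming front) and $\mathtt X(t_j+)^{-1}\big(\mathtt X(t_j+,w')\big)=(d_{i-1},d_i]$ differ by much more than the canceled waves $(c,b]$; intersecting both with the common set $\W(\tint,\xint)$ still leaves two different intervals, with $\inf\jright(t_j-,w,w')=a$ but $\inf\jright(t_j+,w,w')=d_{i-1}$. So ``the two intersections agree'' is false. The numerator $\pi$ is nevertheless unchanged, but only because $d_{i-1}$ is a contact point of $\conv_{[a,c]}F_\e$ with $F_\e$, which forces the two convex envelopes to have the same derivative at $\mathtt u(w')$; this convex-envelope argument is the missing content, not a bookkeeping of dead waves.

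The same oversight breaks your treatment of pairs of surviving waves. If $w,w'\in(a,c]$ end up in different splitting pieces, they share a position just before $t_j$ (so $\mathfrak q(t_j-,w,w')=0$ by the second bullet of the definition) but \emph{not} just after, and they may interact again at a later time; hence ``same position at $t_j$ is the identical condition before and after'' is exactly what fails, and $\mathfrak q(t_j+,w,w')$ must be computed from \eqref{E:weight}. One then has to show that the numerator \eqref{eq:pi} vanishes, which again follows from the fact that the splitting is governed by the convex envelope of $F_\e$ on the surviving waves (Figure \ref{F:cancellation:feff}): the entropic speeds of the outgoing pieces are ordered, so $\sigmaent(w,\jleft(t_j+,w,w'))\le\sigmaent(w',\jright(t_j+,w,w'))$ and the positive part is zero. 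Without these two convex-envelope arguments the proof does not close; note also that your appeal to ``$\mathtt X(t_j-,\cdot)=\mathtt X(t_j+,\cdot)$'' is misleading, since $\fQ(t_j\pm)$ are the constant values on the adjacent open time intervals, on which the wavefront configurations genuinely differ.
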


\begin{proof}
Assume that the left incoming wavefront carries the jump $(a,b)$, the right incoming wavefront carries the jump $(b,c)$ and $a<c<b$, as in Figure \ref{fig:canc}. Thanks to Lemma \ref{lem:scambio:onde:stati} and Remark \ref{rem:scambio:onde:stati}, we can assume w.l.o.g. that
\begin{equation*}
\begin{cases}
(a,b] & \text{is the set of left incoming positive waves} \\
(a,c] & \text{is the set of positive waves which survive after the cancellation}.
\end{cases}
\end{equation*}
Assume also that after the cancellation a splitting of the surviving waves occurs and $(a,c]$ is split in
\begin{equation*}
(a,c] := (a, d_1] \cup (d_1, d_2] \cup \cdots \cup (d_{n-1}, c].
\end{equation*}

\begin{figure}
\begin{tikzpicture}[scale=0.8]
\draw [->] (0,1) to (14,1);
\draw [->] (0,1) to (0,8);
\node[right] at (14,1) {$u$};
\node[above] at (0,8) {$F_\e (u)$};
\draw[green] (1,3) to (13,1);
\draw[blue] (1,3) to (6,2.5) to (8,3) to (10,5) to (11,7);
\draw (11,7) to (13,1);
\draw (4,1.1) to (4,0.9);
\draw (8.8,1.1) to (8.8,0.9);
\node[below] at (4,0.9) {$w$};
\node[below] at (8.8,0.9) {$w'$};
\draw[|-|] (3,1.5) to (6,1.5);
\node[above] at (4.5, 1.5) {\small $\jleft(t_j+,w,w')$};
\draw[|-|] (8,1.5) to (9.6,1.5);
\node[above] at (8.8, 1.5) {\small $\jright(t_j+,w,w')$};
\draw (1,3) to (2.6,4) to (3,5) to (3.2,5.2) to (4,3) to (5,3.2) to (6,2.5) to (7,3.4) to (8,3) to (8.6,6) to (9.6,6);
\draw[red] (3,5) to (4,3)  to (6,2.5);
\draw[red] (8,3) to (9.6,6);
\draw (9.6,6) to (11,7);
\node[red,right] at (3.5,4) {\small $\sigmaent(w, \jleft(t_j+,w,w'))$};
\node[red,left] at (9,5) {\small $\sigmaent(w', \jright(t_j+,w,w'))$};
\draw (1,1.1) to (1,0.9);
\draw (11,1.1) to (11,0.9);
\draw (13,1.1) to (13,0.9);
\node[below] at (1,0.9) {$a$};
\node[below] at (11,0.9) {$c$};
\node[below] at (13,0.9) {$b$};
\draw[<->] (11,0.3) to (13,0.3);
\node[below] at (12,0.3) {\tiny waves canceled};
\end{tikzpicture}
\caption{Cancellation: the \textbf{black line} is the graph of $F_\e$; the \textcolor{green}{green line} is the convex envelop of $F_\e$ \textcolor{green}{before the cancellation} (a single shock), the \textcolor{blue}{blue curve} is the convex envelope \textcolor{blue}{after the cancellation} (a splitting is possible); the function $F_\e$ stays above the blue curve; the \textcolor{red}{red lines} are the \textcolor{red}{convex envelopes} of $F_\e$ corresponding to $\jleft(t_j+,w,w')$, $\jright(t_j,w,w')$ respectively.}
\label{F:cancellation:feff}
\end{figure}

Fix $w,w' \in \W(t_j+)$. We can assume that $w' \in (a,c]$, otherwise it is easy to see that the weight $\mathfrak q(t_j\pm, w,w')$ does not increase. We distinguish two cases.
\begin{itemize}
\item Assume $w \leq a$ and $w' \in (d_{i-1},d_i]$. The denominator $d(t_j \pm,w,w')$ does not change across the interaction. Also the interval $\jleft(t_j\pm,w,w')$ does not change. On the other side, the interval $\jright(t_j\pm,w,w')$ can change across the interaction because of the splitting due to the cancellation, but, however
\begin{equation*}
\sup \jright(t_j+,w,w') = \sup \jright(t_j-,w,w') \leq c,
\end{equation*}
while
\begin{equation*}
\inf \jright(t_j-, w,w') = a, \qquad
\inf \jright(t_j+,w,w') = d_{i-1}.
\end{equation*}
Notice that
\begin{equation*}
\conv_{[a,c]} F_\e(d_{i-1}) = F_\e(d_{i-1})
\end{equation*}
and thus also 
\begin{equation*}
\conv_{\jright(t_j-,w,w')} F_\e(d_{i-1}) = F_\e(d_{i-1}).
\end{equation*}
Hence 
\begin{equation*}
\sigmaent(w', \jleft(t_j-,w,w')) = \sigmaent(w', \jleft(t_j+,w,w')),
\end{equation*}
thus $\pi(t_j-,w,w') = \pi(t_j+,w,w')$ and $\mathfrak q(t_j-, w,w') = \mathfrak q(t_j+,w,w')$. 

\item Assume now $w,w' \in (a,c]$. In this case, before the cancellation, since $w,w'$ have the same position, $\mathfrak q(t_j-,w,w') = 0$.  After the cancellation the situation is as in Figure \eqref{F:cancellation:feff}; since the splitting is obtained taking the convex envelope on the surviving waves (see Figure \ref{F:cancellation:feff}), the numerator in \eqref{E:weight} after the cancellation is zero and thus also $\mathfrak q(t_j+,w,w') = 0$. 
\end{itemize}

This concludes the proof of the proposition.
\end{proof}

\begin{rem}
\label{rem:splitting}
The splitting caused by a cancellation is a typical behavior of conservation laws when the flux function $F$ is not convex: a very small cancellation can split a big shock into small pieces.

Such splitting phenomenon is the reason why we need to define the denominator $d(\bar t,w,w')$ of $\mathfrak q(\bar t,w,w')$ looking at the profile of the solution $u^\e$ at the future time $\tint(\bar t,w,w')$ and not only at the present profile $u^\e(\bar t)$. 


\end{rem}

\subsection{Behavior of $\fQ$ across same-sign interactions}
\label{sec:interaction}

Let now $(t_j, x_j)$ be a same-sign interaction point. Assume that the left incoming wavefront carries the jump $(a,b)$, while the right incoming wavefront carries the jump $(b,c)$, with $a<b<c$, , as in Figure \ref{fig:ssint}. Set $s':= b-a$, $s'':=c-b$. Denote by $\sigma', \sigma''$ the speed of the two incoming wavefronts, respectively. After the interaction the outgoing wavefront has strength $c-a = s'+s''$ and speed
\begin{equation*}
\sigma := \frac{\sigma' s' + \sigma'' s'' }{s'+s''}.
\end{equation*}
The change in speed at $(t_j,x_j)$ is thus:
\begin{equation}
\label{E:quadr:aoi}
\Delta \sigma(t_j,x_j) := |\sigma - \sigma'||s'| + |\sigma - \sigma''||s''| = 2 \cdot  \frac{\sigma' - \sigma''}{s'+s''}s's''.
\end{equation}
Recall that $\sigma'>\sigma''$ because the two wavefronts are interacting. 
\begin{prop}
\label{prop:interaction}
Let $(t_j, x_j)$ be a same-sign interaction point. Then
\begin{equation*}
\frac{1}{2} \Delta \sigma(t_j,x_j) \leq \fQ(t_j-) - \fQ(t_j+).
\end{equation*}
\end{prop}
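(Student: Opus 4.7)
The strategy is to decompose the double integral defining $\fQ$ according to where $w, w'$ sit relative to the interacting states $a, b, c$, compute the ``main'' contribution to the change of $\fQ$ coming from pairs genuinely involved in the interaction, and show that the remaining contributions cancel.

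The main contribution comes from pairs $(w, w') \in (a, b] \times (b, c]$. For such pairs, $\tint(t_j-, w, w') = t_j$ and $\xint(t_j-, w, w') = x_j$, so \eqref{eq:Wtww}--\eqref{E:jj} give $\W(t_j-, w, w') = (a, c]$, $\jleft(t_j-, w, w') = (a, b]$, and $\jright(t_j-, w, w') = (b, c]$. Since each incoming wavefront is a single wavefront of the piecewise-linear $F_\e$, the convex envelopes $\conv_{[a,b]} F_\e$ and $\conv_{[b,c]} F_\e$ are the Rankine-Hugoniot chords of slopes $\sigma'$ and $\sigma''$, so $\sigmaent(w, (a, b]) = \sigma'$ and $\sigmaent(w', (b, c]) = \sigma''$. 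Hence $\pi(t_j-, w, w') = \sigma' - \sigma''$, $d(t_j-, w, w') = s' + s''$, and $\mathfrak{q}(t_j-, w, w') = (\sigma' - \sigma'')/(s' + s'')$. At $t_j+$, both waves sit at $x_j$, so the ``same position'' clause in the definition of $\mathfrak{q}$ gives $\mathfrak{q}(t_j+, w, w') = 0$. Integrating,
\[
\iint_{(a,b] \times (b,c]} \bigl(\mathfrak{q}(t_j-, w, w') - \mathfrak{q}(t_j+, w, w')\bigr)\, dw\, dw' = \frac{(\sigma' - \sigma'')\, s' s''}{s' + s''} = \tfrac{1}{2}\Delta\sigma(t_j, x_j).
\]

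For pairs with both $w, w' \notin (a, c]$, the positions are unperturbed and the meeting time $T > t_j$, so $\W, \jleft, \jright$ and $\mathfrak{q}$ are invariant; for pairs with both in $(a, c]$ but outside the rectangle, $\mathfrak{q}$ vanishes on both sides by the ``same position'' rule. The nontrivial case is the mixed pairs, with exactly one endpoint in $(a, c]$. Here $\tint$ and $\xint$ are still invariant, so $d$ is invariant; either $\jleft$ or $\jright$ grows from a subset of an incoming wavefront into a subset of the outgoing wavefront $(a, c]$. The key observation is that for fixed $w \le a$, integrating the change in $\pi(t_j\pm, w, w')$ over $w' \in (a, c]$ produces an exact cancellation: the ``bad'' increase on $(a, b]$ (where $\sigmaent(w', \jright)$ drops from $\sigma'$ to $\sigma$) amounts to $(\sigma' - \sigma) s'$, while the ``good'' decrease on $(b, c]$ (where $\sigmaent(w', \jright)$ rises from $\sigma''$ to $\sigma$) amounts to $(\sigma - \sigma'') s''$, and these coincide by the defining identity $\sigma = (\sigma' s' + \sigma'' s'')/(s' + s'')$. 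The symmetric cancellation holds for $w' > c$ upon integrating over $w \in (a, c]$.

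The main obstacle is making this cancellation rigorous in the presence of splittings of the outgoing wavefront caused by subsequent cancellations (see Remark \ref{rem:splitting}). When no splitting occurs, the cluster $\W(T, X)$ contains all of $(a, c]$, the relevant entropic speeds are all computed on the same interval, and the algebraic cancellation above is immediate. When splittings occur, $\W(T, X)$ contains only a sub-wavefront of $(a, c]$, and the cancellation must be verified cluster by cluster; the fundamental property in Lemma \ref{L:fund:prop} together with the interval-of-waves structure from Lemma \ref{lem:scambio:onde:stati} should allow one to partition the mixed pairs into groups sharing a common $\W$-cluster on which the ``bad $w'$'' and ``good $w'$'' contributions still match. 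Once this cluster-wise cancellation is established, the desired bound $\tfrac{1}{2}\Delta\sigma(t_j, x_j) \le \fQ(t_j-) - \fQ(t_j+)$ follows directly from the main contribution computation above.
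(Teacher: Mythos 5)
Your computation of the main contribution from pairs $(w,w')\in(a,b]\times(b,c]$ is correct and identical to the paper's, and your reduction of the rest of the proof to the mixed pairs (exactly one of $w,w'$ in $(a,c]$) is also the right decomposition. The problem is that the mixed-pair step is not actually proved, and the cancellation you sketch for it is not the one that works. First, the numerator $\pi$ carries a positive part, so for fixed $w\le a$ the increase on $(a,b]$ is not $(\sigma'-\sigma)s'$ and the decrease on $(b,c]$ is not $(\sigma-\sigma'')s''$; there is no exact cancellation, only an inequality in the right direction, obtained from the subadditivity $[x+y]^+\le[x]^+ +[y]^+$ applied to $(\lambda-\sigma)(s'+s'')=(\lambda-\sigma')s'+(\lambda-\sigma'')s''$, where $\lambda=\sigmaent(w,\jleft(t_j\pm,w,w'))$ is the (unchanged) left speed. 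Second, and more seriously, after the interaction $\jright(t_j+,w,w')$ is \emph{not} $(a,c]$ and the relevant speed is \emph{not} $\sigma$: because of future splittings one has $\jright(t_j+,w,w')=(a,d_i]$ for $w'$ in the cluster $(d_{i-1},d_i]$ determined by $\tint(t_j,w,\cdot)$, and the speeds entering $\pi$ are derivatives of convex envelopes of $F_\e$ over $(a,d_i]$, $(a,\min\{b,d_i\}]$ and $(b,d_i]$, not the Rankine--Hugoniot speeds $\sigma',\sigma'',\sigma$ of the physical wavefronts at $t_j$. You acknowledge this obstacle but defer its resolution (``should allow one to partition\dots''), and that deferred step is where essentially all of the work of the proposition lies.

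Moreover, the cluster-by-cluster version of the identity $\sigma=(\sigma's'+\sigma''s'')/(s'+s'')$ that you propose is not the mechanism of the actual proof. In the paper's argument, for fixed $w\le a$ the clusters $(d_{i-1},d_i]$ not containing $b$ produce no increase at all (for $d_i\le b$ the interval $\jright$ does not change); the only delicate cluster is the one straddling $b$, say $(d_{j-1},d_j]$ with $d_{j-1}\le b<d_j$. There one must further isolate the maximal interval $(a,\bar d)\ni b$ on which $\conv_{[a,d_j]}F_\e<F_\e$, observe that nothing changes for $w'\in(\bar d,d_j]$, and show that the increase on $(d_{j-1},b]$ is absorbed by the decrease on $(b,\bar d]$ using the chord identity $\sigmarh\big((a,\bar d]\big)(\bar d-a)=\sigmarh\big((a,b]\big)(b-a)+\int_b^{\bar d}\sigmaent\big(w',(b,\bar d]\big)\,dw'$, the inequality $\sigmarh\big((a,b]\big)\ge\sigmarh\big((a,\bar d]\big)$, and subadditivity of $[\cdot]^+$. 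Without carrying out this (or an equivalent) convex-envelope argument, the proposal does not establish the proposition.
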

\begin{proof}
Thanks to Lemma \ref{lem:scambio:onde:stati} and Remark \ref{rem:scambio:onde:stati}, we can assume w.l.o.g. that
\begin{equation*}
\begin{cases}
(a,b] & \text{is the set of left incoming waves}, \\
(b,c] & \text{is the set of right incoming waves}.
\end{cases}
\end{equation*}

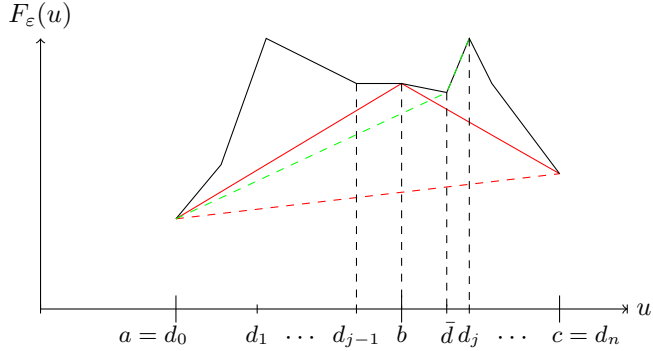
\begin{figure}
\begin{tikzpicture}[scale=0.6]
\draw[->] (0,0) to (13,0);
\draw[->] (0,0) to (0,6);
\node[right] at (13,0) {$u$};
\node[above] at (0,6) {$F_\e(u)$};
\draw[red] (3,2) to (8,5) to (11.5,3);
\draw[red, dashed] (3,2) to (11.5,3);
\draw (3,2) to (4,3.2) to (5,6) to (7,5) to (8,5) to (9,4.8) to (9.5,6) to (10,5) to (11.5,3);
\draw (0,0.1) to (0,-0.1);
\draw (13,0.1) to (13,-0.1);
\draw (3,0.3) to (3,-0.3);
\draw (8,0.3) to (8,-0.3);
\draw (11.5,0.3) to (11.5,-0.3);
\draw (4.8,0.1) to (4.8,-0.1);
\draw (7,0.1) to (7,-0.1);
\draw (9.5,0.1) to (9.5,-0.1);
\draw (9,0.1) to (9,-0.1);
\draw[dashed, green] (3,2) to (9,4.8) to (9.5,6);
\draw[dashed] (7,5) to (7,0);
\draw[dashed] (8,5) to (8,0);
\draw[dashed] (9,4.8) to (9,0);
\draw[dashed] (9.5,6) to (9.5,0);
\node[below] at (2.5,-0.15) {\small $a = d_0$};
\node[below] at (8,-0.15) {\small $b$};
\node[below] at (12.1,-0.15) {\small $c = d_n$};
\node[below] at (4.8,-0.15) {\small $d_1$};
\node[below] at (7,-0.15) {\small $d_{j-1}$};
\node[below] at (9,-0.1) {\small $\bar d$};
\node[below] at (9.5,-0.15) {\small $d_{j}$};
\node[below] at (5.8, -0.3) {$\cdots$};
\node[below] at (10.5, -0.3) {$\cdots$};
\end{tikzpicture}
\caption{Interaction (same sign): the two interacting wavefronts are $(a,b]$ on the left and $(b,c]$ on the right. The \textbf{black line} is the graph of $F_\e$. The \red{red lines} are the convex envelopes of $F_\e$ before and after (dashed line) the interaction. The \green{green dashed line} is the convex envelope of $F_\e$ on $[a,d_{j-1}]$, used to compute $\pi(t_j+,w,w')$ for waves in $(d_{j-1}, d_j]$ after the interaction.}
\label{F:interaction}
\end{figure}

The weight $\mathfrak q(t_j\pm,w,w')$ can change at $t_j$ only if at least one among $w,w'$ is in $(a,c]$. Therefore is it sufficient to prove that:
\begin{enumerate}
\item the integral on pairs $(w,w')$ with both  $w,w' \in (a,c]$ bounds the change in speed:
\begin{equation*}
\int_a^b \int_b^c \Big[ \mathfrak q(t_j-,w,w') - \mathfrak q(t_j+,w,w')\Big] dw'dw \geq \frac{1}{2} \Delta \sigma(t_j, x_j);
\end{equation*}

\item the integral on pairs $(w,w')$ with only one between $w,w'$ in $(a,c]$ (say $w'$) is decreasing:
\begin{equation*}
\int^a \int_a^c \Big [\mathfrak q(t_j+,w,w') - \mathfrak q(t_j-,w,w') \Big] dw'dw \leq 0.  
\end{equation*}
(The computation for $\int_a^c \int_c [ \mathfrak q(t_j+) - \mathfrak q(t_j-) ] dw'dw$ is analogous). 
\end{enumerate}

For 1. it is  sufficient to observe that whenever $(w,w') \in (a,b] \times [b,c]$,  $\mathfrak q(t_j+,w,w') = 0$, while 
\begin{equation*}
\mathfrak q(t_j-,w,w') = \frac{\sigma' - \sigma''}{s'+s''},
\end{equation*}
because, by definition, $\jleft(t_j-,w,w') = (a,b]$ and $\jright(t_j-,w,w') = (b,c]$ and the convex envelopes on $(a,b]$ and $(b,c]$ are given by single segments. 

For 2. we proceed as follows. Fix $w \in (0,a]$. Let $$d_0 = a < d_1 < \cdots < d_{n-1} < d_n = c,$$ such that for $w',w'' \in (d_{i-1}, d_{i}]$, $\tint(t_j,w,w') = \tint(t_j,w,w'')$. See Figure \ref{F:interaction}.  Notice that if $w' \in (d_{i-1}, d_{i}]$,
\begin{equation*}
\jright(t_j+,w,w') = (a,d_{i}], 
\end{equation*}
while
\begin{equation*}
\jright(t_j- ,w,w') =
\begin{cases}
(a, \min\{b,d_{i}\}] & \text{if } w' \in (a,b], \\
(b, d_{i}] & \text{if } w' \in (b,c]. 
\end{cases}
\end{equation*}
Therefore the weight $\mathfrak q(t_j \pm,w,w')$ can increase only if $w' \in (d_{j-1}, d_{j}] \cap (a,b]$, where $j$ is the unique index such that $d_{j-1} \leq b < d_{j}$. Even more precisely, let 
\begin{equation}
\label{eq:schock:int}
(a,\bar d) := \bigcup \Big \{I \subseteq \R \text{ interval} \ \Big| \ b \in I, \ \conv_{[a,d_j]} F_\e < F_\e \text{ on } I \Big\}
\end{equation}
be the maximal interval such that $\conv_{[a,\bar d]} F_\e < F_\e$ on $(a,\bar d)$.
%
If $w' \in (\bar d, d_j]$, then $\mathfrak q(t_j,w,w')$ does not change, because 
\begin{equation*}
\conv_{[a,d_j]} F_\e(\bar d) = \conv_{[b,d_j]} F_\e(\bar d) = F_\e(\bar d).
\end{equation*}
%
Therefore the weight $\mathfrak q(t_j \pm,w,w')$ can increase only if $w' \in  (d_{j-1}, b]$. However, in this case the weight of the waves in $(b, \bar d]$ decreases and, roughly speaking, such decrease is enough to compensate the increase of the weight of the waves in $(d_{j-1}, b]$. This can be proved as follows. 

First of all observe that for any $w' \in (d_{j-1}, \bar d]$, the denominator does not change:
\begin{equation*}
d(t_j-, w,w') = d(t_j+,w,w').
\end{equation*}
Therefore it is sufficient to compute the change of $\fQ$ due to the numerator. Notice that at time $t_j+$ it holds
\begin{equation*}
\jright(t_j+,w,w') = (a, d_{j}];
\end{equation*}
therefore, by Lemma \ref{L:fund:prop}, the interval $\jleft: = \jleft(t_j+,w,w')$ does not depend on $w' \in (d_{j-1}, \bar d]$. Moreover, by definition, 
\begin{equation*}
\jleft(t_j-,w,w') = \jleft(t_j+,w,w') = \jleft.
\end{equation*}
Notice also that, by the properties of the convex envelopes,
\begin{equation*}
\sigmaent \big(w', (a, d_j] \big) = \sigmarh \big((a,\bar d]  \big), \qquad
\sigmaent \big(w', (b, d_j] \big) = \sigmaent \big(w', (b,\bar d]  \big).
\end{equation*}
Hence we have
\begin{equation*}
\begin{split}
\pi(t_j+,w,w') & = 
\Big[\sigmaent(w, \jleft) - \sigmaent \big(w', (a,d_{j}] \big) \Big]^+ \\
& = 
\Big[\sigmaent(w, \jleft) - \sigmarh \big( (a,\bar d] \big) \Big]^+
\end{split}
\end{equation*}
and
\begin{equation*}
\pi(t_j-,w,w') = 
\begin{cases}
\Big[\sigmaent(w, \jleft) - \sigmarh \big((a,b] \big) \Big]^+ & \text{if } w' \in (d_{j-1}, b], \\
\Big[\sigmaent(w, \jleft) - \sigmaent \big(w', (b,\bar d] \big) \Big]^+
& \text{if } w' \in (b, d_{j}].
\end{cases}
\end{equation*}
Therefore for the numerator it holds (recall that $w \leq a$ is fixed)
\begin{equation*}
\begin{split}
\int_{d_{j-1}}^{\bar d} & \pi(t_j+,w,w') dw' \\
& =  \int_{d_{j-1}}^{\bar d} \Big[\sigmaent(w, \jleft) - \sigmarh\big((a, \bar d]\big) \Big]^+ dw' \\
& = 
\Big[\sigmaent(w, \jleft) - \sigmarh\big((a, \bar d]\big) \Big]^+ (\bar d - d_{j-1}) \\
& = 
\Big[\sigmaent(w, \jleft) - \sigmarh\big( (a, \bar d]\big) \Big]^+ (\bar d - a) \\
& \qquad \qquad - 
\Big[\sigmaent(w, \jleft) - \sigmarh\big( (a, \bar d]\big) \Big]^+ (d_{j-1} - a) \\
& = 
\Big[\sigmaent(w, \jleft)(\bar d - a) - \sigmarh\big( (a, \bar d]\big) (\bar d - a) \Big]^+ \\
& \qquad \qquad
- \Big[\sigmaent(w, \jleft) - \sigmarh\big( (a, \bar d]\big) \Big]^+ (d_{j-1} - a) \\
& = 
\Big[\sigmaent(w, \jleft)(\bar d - a) - \int_a^{b} \sigmarh \big((a, b]\big)dw' - \int_b^{\bar d} \sigmaent \big(w', (b, \bar d] \big) dw' \Big]^+ \\
& \qquad \qquad 
- 
\Big[\sigmaent(w, \jleft) - \sigmarh\big((a, \bar d]\big) \Big]^+ (d_{j-1} - a) \\
& = 
\Bigg[\int_a^{b} \Big( \sigmaent(w, \jleft) - \sigmarh \big( (a, b]\big) \Big)dw' \\
& \qquad \qquad
+ \int_b^{\bar d} \Big( \sigmaent(w, \jleft) - \sigmaent \big(w', (b, \bar d] \big) \Big) dw' \Bigg]^+ \\
& \qquad \qquad \qquad \qquad 
- 
\Big[\sigmaent(w, \jleft) - \sigmarh\big((a, \bar d]\big) \Big]^+ (d_{j-1} - a) \\
& \leq
\int_a^{b} \Big[ \sigmaent(w, \jleft) - \sigmarh \big( (a, b]\big) \Big]^+dw' \\
& \qquad \qquad 
+ \int_b^{\bar d} \Big[ \sigmaent(w, \jleft) - \sigmaent \big(w', (b, \bar d] \big) \Big]^+ dw'  \\
& \qquad \qquad 
- 
\Big[\sigmaent(w, \jleft) - \sigmarh\big((a, \bar d]\big) \Big]^+ (d_{j-1} - a) \\
\text{(since } & \text{$\sigmarh \big((a,b] \big) \geq \sigmarh \big((a,\bar d] \big)$, being $(a, \bar d]$ defined through \eqref{eq:schock:int})} \\
& \leq
\int_{d_{j-1}}^{b} \Big[ \sigmaent(w, \jleft) - \sigmarh \big( (a, b]\big) \Big]^+dw' \\
& \qquad \qquad
+ \int_b^{\bar d} \Big[ \sigmaent(w, \jleft) - \sigmaent \big(w', (b, \bar d] \big) \Big]^+ dw',  \\
\end{split}
\end{equation*}
\begin{equation*}
\begin{split}
\text{(by } & \text{the properties of the convex envelopes and the definition of $\bar d$)} \\
& =
\int_{d_{j-1}}^{b} \Big[ \sigmaent(w, \jleft) - \sigmarh \big( (a, b]\big) \Big]^+dw' \\
& \qquad \qquad
+ \int_b^{\bar d} \Big[ \sigmaent(w, \jleft) - \sigmaent \big(w', (b, d_j] \big) \Big]^+ dw',  \\
& = 
\int_{d_{j-1}}^{b} \pi(t_j-,w,w') dw' + \int_b^{\bar d} \pi(t_j-,w,w') dw' \\
& = \int_{d_{j-1}}^{\bar d} \pi(t_j-,w,w') dw',
\end{split}
\end{equation*}
which concludes the proof of Proposition \ref{prop:interaction} and therefore also the proof of Proposition \ref{prop:main}. 
\end{proof}

\section{Extension to systems}
\label{sec:extension}

The extension of Theorem \ref{thm:main} to the case of systems \eqref{eq:cauchy} is pretty technical. However, it requires only the same ideas and techniques already used in the paper \cite{BiaMod3}, where a quadratic interaction potential (depending also on the past profiles $u^\e(t)$, $t < \bar t$) for systems is constructed. In this section we present a brief (and far from complete) overview of such ideas and techniques. For details we refer to the cited technical article \cite{BiaMod3} and to the notes \cite{Mod} (for a more ``user-friendly'' exposition). 

In order to extend Theorem \ref{thm:main} to the case of systems \eqref{eq:cauchy}, three main issues have to be addressed.
\begin{enumerate}

\item The flux used to solve a Riemann problem $(u^L, u^R)$ is not the flux $F$ (or its approximate version $F_\e$), as in the scalar case; on the contrary, $n$ \emph{reduced fluxes} $\tilde f_k$ ($k=1,\dots, n$, one for each family) are constructed, which depend on the left and right states $u^L, u^R$.

\item Besides same-sign interactions and cancellations,  also \emph{transversal interactions}, i.e. interactions between wavefronts of different families, have to be considered.

\item When an interaction between wavefronts takes place, there can be a \emph{small creation or cancellation} of waves, due to the non-linearity of the flux. For instance, an interaction between two wavefronts of the first family can produce a small creation of waves of the first family and also a small creation of waves of the other families. 


\end{enumerate}

These three problems are solved as follows. First of all we construct a suitable wave tracing, as in Section \ref{sec:wavetracing}. Here, however, we introduce $n$ sets of waves $\W_k$ ($k=1, \dots, n$, one for each family) and for each wave $w \in \W_k$, we define its sign $\mathcal S(w)$, its creation time $\tcr(w)$ (which can be strictly greater than $0$) and its cancellation time $\tcanc(w)$, its position $\mathtt X_k(t,w)$  and its speed $\sigma_k(t,w)$ at each time $t \in [\tcr(w), \tcanc(w))$.

\subsection{The effective fluxes}

The first issue, i.e. the fact that in the solution to a Riemann problem one has to use the reduced fluxes $\tilde f_k$, $k=1, \dots, n$ and not the original flux $F$, is solved through the introduction, for each time $t$, of $n$ \emph{effective fluxes}
\begin{equation*}
\feff_k(t): \W_k(t) \to \R \quad (k=1, \dots, n), \qquad \feff_k(t) \in C^{1,1},
\end{equation*}
where $\W_k(t) := \{w \in \W_k \ | \ \tcr(w) \leq t < \tcanc(w) \}$. 

The key property of the effective fluxes is the following: assume that at point $\bar x$ the piecewise constant map $x \mapsto u^\e(t, x)$ has a jump between $u^L$ and $u^R$; then, for each $k=1, \dots, n$, the effective flux $\feff_k(\bar t)$, restricted to the set $\mathtt X_k(\bar t)^{-1}(\bar x)$,  coincides, up to affine functions, with the reduced flux $\tilde f_k$ used to solve the Riemann problem $(u^L, u^R)$. 

Now, as in Section \ref{sec:def:q}, given $w,w' \in \W_k(\bar t)$, $w<w'$, with the same sign, different positions and such that they will interact after time $\bar t$, we can define $\W_k(\bar t,w,w')$ as in \eqref{eq:Wtww}, the intervals $\jleft_k, \jright_k$ as in \eqref{E:jj}, the weight $\mathfrak q_k(t,w,w')$ as in \eqref{E:weight}, the numerator $\pi_k(\bar t,w,w')$ as in \eqref{eq:pi} and the denominator $d_k(\bar t,w,w')$ as in \eqref{eq:di}. The only difference now is that  $\sigmaent(w,\jleft_k)$ (resp. $\sigmaent(w', \jright_k)$) is the entropic speed given to the wave $w$ (resp. $w'$) by the interval $\jleft_k$ (resp. $\jright_k$) and the effective flux $\feff_k(\bar t)$.

\subsection{Transversal interactions}

Transversal interactions generate two problems.

 First of all, the waves involved in a transversal interaction can change their speed because of the transversal interaction. However such change of speed is bounded by the decrease of the standard transversal interaction potential (introduced by Glimm in \cite{gli65}):
\begin{equation*}
\Qtrans(t) := \sum_{k < h} \mathcal L^2 \Big( \Big\{ (w,w') \in \W_k \times \W_h \ \Big| \ \mathtt X_k(t,w) > \mathtt X_h(t,w') \Big\} \Big). 
\end{equation*}

Secondly,  the weights $\mathfrak q_k(t,w,w')$ can change across transversal interactions. More precisely, the denominator $d_k(t,w,w')$ does not change, by definition. On the contrary, the numerator $\pi_k(t,w,w')$ can change (and possibly increase), for two reasons:
\begin{itemize}
\item in general the effective flux $\feff_k(t-)$ before the interaction is different from the effective flux $\feff_k(t+)$ after the interaction; this can cause an increase of the numerator $\pi_k(t,w,w')$; however the change of the effective flux due to the transversal interaction is bounded, in a suitable norm, by the decrease of $\Qtrans$; we will thus substitute $\fQ$ (which is not decreasing anymore) with $\fQ + c \Qtrans$, where $c$ is a sufficiently big constant;
\item as in the case of cancellations (Section \ref{sec:cancellation}), transversal interactions can cause  splittings; such splittings, however, can be treated exactly as in the case of cancellations, analyzing separately the case when both $w,w'$ are involved in the transversal interaction and the case when at most one of them is involved in the transversal interaction. 
\end{itemize}

\subsection{Small creations and cancellations}

As for transversal interactions, also small creations and cancellations generate two problems. First of all, the waves can change their speed due to these small creations/cancellations. However such change of speed is bounded by the decrease of the cubic potential introduced by Bianchini in \cite{Bia}:
\begin{equation*}
\Qcubic(t): = \sum_{k=1}^n \iint_{\substack{w,w' \in \W_k(t) \\ w<w'}} \big|\sigma_k(t,w) - \sigma_k(t,w') \big| dwdw'.
\end{equation*}

Secondly, the weights $\mathfrak q_k(t,w,w')$ can change (and possibly increase) due to these small creations/cancellations. However, also in this case, it is not  difficult to prove that the change of $\mathfrak q_k$ is bounded by the decrease of $\Qcubic$. 

Therefore, in order to guarantee that the interaction potential $\Upsilon$ is decreasing also in the case of system, we have to substitute definition \eqref{eq:upsilon} with the following:
\begin{equation*}
\Upsilon(t) := \fQ(t) + c \big[  \TV(u^\e(t)) +  \Qtrans(t) + \Qcubic(t) \big],
\end{equation*}
for a sufficiently big constant $c>0$. In particular:
\begin{itemize}
\item $\fQ(t)$ bounds the change of speed due to same-sign interactions;
\item $\TV(u^\e(t))$ bounds the chance of speed due to cancellations;
\item $c \Qtrans(t)$ bounds the change of speed and the possible increase of $\TV(u^\e(t))+ \fQ(t)$ due to transversal interactions;
\item $c \Qcubic(t)$ bounds the change of speed and the possible increase of $\TV(u^\e(t))+ \fQ(t)$ due to small creations/cancellations.
\end{itemize}


\begin{thebibliography}{1}


%
\bibitem{Bia} S. Bianchini, \textit{Interaction estimates and Glimm functional for general hyperbolic systems}, DCDS-A \textbf{9(1)} (2003), 133-166.
%
\bibitem{BiaBre} S. Bianchini, A. Bressan, \textit{Vanishing viscosity solutions of nonlinear hyperbolic systems}. Annals of Mathematics \textbf{161} (2005), 223-342.
%
%
%
\bibitem{BiaMod1} S. Bianchini, S. Modena, \textit{On a quadratic functional for scalar conservation laws}. Journal of Hyperbolic Differential Equations \textbf{11(2)} (2014), 355-435.

\bibitem{BiaMod2} S. Bianchini, S. Modena, \textit{Quadratic interaction functional for systems of conservation laws: a case study}. Bulletin of the Institute of Mathematics, Academia Sinica (New Series) \textbf{9(3)} (2014), 487-546.

\bibitem{BiaMod3} S. Bianchini, S. Modena, \textit{Quadratic interaction functional for general systems of conservation laws}. Comm. Math. Phys. \textbf{338} (2015), 1075-1152.
%
%
%
\bibitem{Bre} A. Bressan, \textit{Hyperbolic systems of conservation laws. The one dimensional Cauchy problem}. Oxford University Press, 2000.

\bibitem{Daf} C. Dafermos, \textit{Hyberbolic conservation laws in continuum physics}. Springer, 2009.


\bibitem{gli65} J. Glimm, 
\textit{Solutions in the Large for Nonlinear Hyperbolic Systems of Equations}. 
Comm. Pure Appl. Math. {\bf 18} (1965), 697-715.

%
%
%
\bibitem{TaiPing} T.-P. Liu, \textit{The deterministic version of the Glimm scheme}. Comm. Math. Phys. \textbf{57} (1977), 135-148.

\bibitem{TaiPing2} T.-P. Liu, \textit{Admissible solutions of hyperbolic conservation laws}. Mem. Amer. Math. Soc. \textbf{30(240)} (1981).


%

\bibitem{Mod} S. Modena, \textit{Quadratic interaction estimate for hyperbolic conservation laws, an overview}. Contemporary Mathematics. Fundamental Directions. \textbf{59} (2016), 148-172.

%
%
%



\end{thebibliography}
\end{document}